\theoremstyle{plain}
\newtheorem{thm}{Theorem}
\newtheorem{lem}[thm]{Lemma}
\newtheorem{cor}[thm]{Corollary}
\newtheorem{defn}[thm]{Definition}
\newtheorem{rem}[thm]{Remark}
\newtheorem{asmp}[thm]{Assumption}
\newtheorem{ques}[thm]{Question}
\newtheorem{conj}[thm]{Conjecture}
\title{On $k$-visibility graphs}
\author{Matthew Babbitt \qquad J.T. Geneson \qquad Tanya Khovanova\\
\small Department of Mathematics\\[-0.8ex]
\small MIT\\[-0.8ex]
\small Massachusetts, U.S.A.\\
\small\tt tanya@math.mit.edu\\
\small\tt geneson@math.mit.edu\\
\small\tt mbabbitt@mit.edu
}
\date{}
\begin{document}
\maketitle

\begin{abstract}
We examine several types of visibility graphs in which sightlines can pass through $k$ objects. For $k \geq 1$ we bound the maximum thickness of semi-bar $k$-visibility graphs between $\lceil \frac{2}{3} (k + 1) \rceil$ and $2k$. In addition we show that the maximum number of edges in arc and circle $k$-visibility graphs on $n$ vertices is at most $(k+1)(3n-k-2)$ for $n > 4k+4$ and ${n \choose 2}$ for $n \leq 4k+4$, while the maximum chromatic number is at most $6k+6$. In semi-arc $k$-visibility graphs on $n$ vertices, we show that the maximum number of edges is ${n \choose 2}$ for $n \leq 3k+3$ and at most $(k+1)(2n-\frac{k+2}{2})$ for $n > 3k+3$, while the maximum chromatic number is at most $4k+4$. 
\end{abstract}

\section{Introduction}
Visibility graphs are graphs for which vertices can be drawn as regions so that two regions are visible to each other if and only if there is an edge between their corresponding vertices. In this paper we study bar, semi-bar, arc, circle, and semi-arc visibility graphs. We also study a variant of visibility graphs represented by drawings in which objects are able to see through exactly $k$ other objects for some positive integer $k$. These graphs are known as $k$-visibility graphs. 

Dean \emph{et al.} \cite{AlDeank} previously placed upper bounds on the number of edges, the chromatic number, and the thickness of bar $k$-visibility graphs with $n$ vertices in terms of $k$ and $n$. Felsner and Massow \cite{Felsner} tightened the upper bound on the number of edges of bar $k$-visibility graphs and placed bounds on the number of edges, the thickness, and the chromatic number of semi-bar $k$-visibility graphs. Hartke {\it et al.} \cite{Hartke} found sharp upper bounds on the maximum number of edges in bar $k$-visibility graphs.

Other research has found classes of graphs which can be represented as bar visibility graphs. For example Lin \emph{et al.} \cite{bartri} determined an algorithm for plane triangular graphs $G$ with $n$ vertices which outputs a bar visibility representation of $G$ no wider than $\lfloor \frac{22n-42}{15}\rfloor$ with bar ends on grid points in time $O(n)$. Luccio \emph{et al.} \cite{Luccio} proved any bar visibility graph can be transformed into a planar multigraph with all triangular faces by successively duplicating edges, and furthermore that every graph which can be transformed into a planar multigraph with all triangular faces by successively duplicating edges can be represented as a bar visibility graph. 

In Section~\ref{sec:def} we define each type of $k$-visibility graph considered in this paper. In Section~\ref{sec:upperbounds} we bound the maximum thickness of semi-bar $k$-visibility graphs between $\lceil \frac{2}{3} (k + 1) \rceil$ and $2k$. We also bound the maximum number of edges and the chromatic numbers of arc, circle, and semi-arc $k$-visibility graphs. In Section~\ref{sec:semibar} we show an equation based on skyscraper puzzles for counting the number of edges in any semi-bar $k$-visibility graph. 

After an earlier version of this paper was posted, we found an abstract \cite{ntu}, with no paper, that claimed to bound the maximum thickness of bar $k$-visibility graphs between $\lceil \frac{2k+3}{3} \rceil$ and $3k+3$ and the maximum thickness of semi-bar $k$-visibility graphs between $\lceil \frac{2k+5}{6} \rceil$ and $2k$. None of the proofs in our paper use the results claimed in \cite{ntu}.

\section{Definitions and assumptions}\label{sec:def}
In this section we define the various types of visibility graphs and cover conditions that we assume throughout the paper. 

\begin{defn}
The \emph{thickness} $\Theta(G)$ of a graph $G$ is the minimum number of planar subgraphs whose union is $G$. That is, $\Theta(G)$ is the minimum number of colors needed to color the edges of $G$ such that no two edges with the same color intersect.
\end{defn}

Research on bounding graph thickness is motivated by the problem of efficiently designing very large scale integration (VLSI) circuits. VLSI circuits are built in layers to avoid wire crossings which disrupt signals \cite{thickness}. For each graph $G$ with vertices corresponding to circuit gates and edges corresponding to wires, the thickness of $G$ gives an upper bound on how many layers are needed to build the VLSI circuit without wires crossing in the same layer.

Below are definitions for the different kinds of $k$-visibility graphs. 

\subsection{Bar $k$-visibility graphs}
\emph{Bar visibility graphs} are graphs that have the property that the vertices of the graph correspond to the elements of a given set of horizontal segments, called \emph{bars}, in such a way that two vertices of the graph are adjacent whenever there exists a vertical segment which only intersects the horizontal segments corresponding to those vertices. The set of horizontal segments is said to be the \emph{bar visibility representation} of the graph. 

This type of visibility graph was introduced in the 1980s by Duchet \emph{et al.} \cite{Duchet} and Schlag \emph{et al.} \cite{Schlag} mainly for its applications in the development of VLSI. Figure~\ref{barvisible} gives an example of a set of horizontal line segments and its corresponding bar visibility graph. Sightlines are drawn as dashed lines.

\begin{figure}[h]
\begin{center}
\includegraphics[scale=0.4]{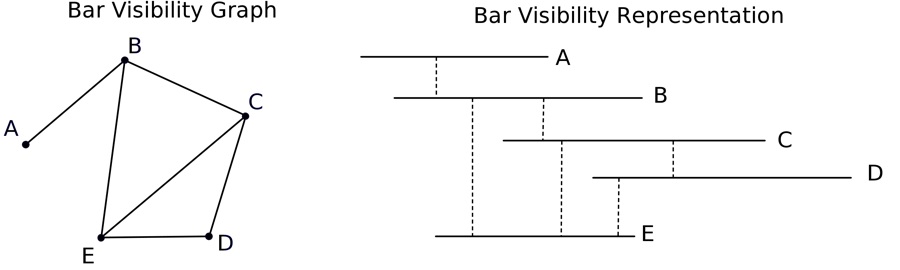}
\setlength{\abovecaptionskip}{0pt}
\caption{A bar visibility graph and its bar visibility representation.}
\label{barvisible}
\end{center}
\end{figure}

Dean \emph{et al.} \cite{AlDeank} extended the definition of bar visibility graphs by allowing visibility through $k$ bars. Vertices $u$ and $v$ have an edge in the graph if and only if there exists a vertical segment intersecting the horizontal bars corresponding to $u$ and $v$ and at most $k$ other horizontal bars. These graphs are called \emph{bar $k$-visibility graphs}. In this terminology visibility means $0$-visibility and we will use these terms interchangeably. Figure~\ref{barkvisible} shows an example of a bar $1$-visibility graph with a bar $1$-visibility representation equivalent to the one in Figure~\ref{barvisible}. Sightlines that pass through an additional bar are drawn thicker than the original sightlines. 

\begin{figure}[h]
\begin{center}
\includegraphics[scale=0.4]{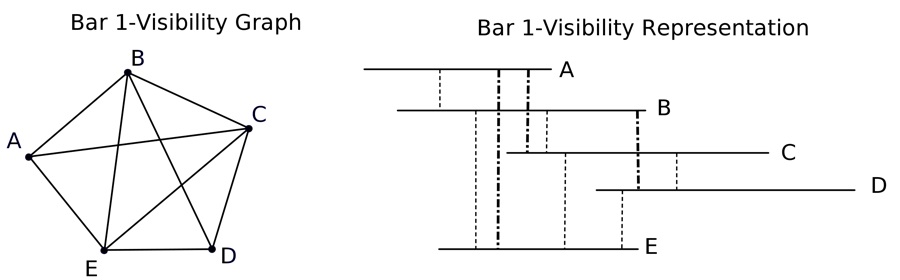}
\setlength{\abovecaptionskip}{0pt}
\caption{A bar $1$-visibility graph and its bar $1$-visibility representation.}
\label{barkvisible}
\end{center}
\end{figure}

\subsection{Semi-bar $k$-visibility graphs}
A \emph{semi-bar $k$-visibility graph} is a bar $k$-visibility graph where the left endpoints of all the bars have $x$-coordinates equal to 0. We prove an upper bound of $2k$ on the thickness of semi-bar $k$-visibility graphs and show that there exist semi-bar visibility graphs with thickness at least $\left\lceil \frac{2}{3}(k+1)\right\rceil$. We will assume that all semi-bars have different lengths unless otherwise specified. If any pair of semi-bars had the same length, then the length of one could be changed without deleting any edges.

Every semi-bar $k$-visibility graph on $n$ vertices, including representations that contain semi-bars of equal lengths, can be represented using semi-bars with integer lengths between $1$ and $n$ inclusive. Therefore every semi-bar $k$-visibility graph can be represented by a sequence of $n$ positive integers between $1$ and $n$ inclusive.

Felsner and Massow \cite{Felsner} proved that the maximum number of edges in a semi-bar $k$-visibility graph with $n$ vertices is $(k+1)(2n-2k-3)$ for $n \geq 2k+2$ and $\binom{n}{2}$ for $n \leq 2k+2$. We give a formula to count the number of edges of an arbitrary semi-bar $k$-visibility graph based on functions of its semi-bar $k$-visibility representation. 

The method we use is inspired by skyscraper problems, which were also examined using permutations in \cite{Kh}. A skyscraper puzzle consists of an empty $n \times n$ grid with numbers written left or right of some rows and above or below some columns. The solver fills the grid with numbers between $1$ and $n$ representing heights of skyscrapers placed in each entry of the grid. The numbers are placed so that no two skyscrapers in the same column or same row have the same height.

If there is a number $m$ above a column in the empty grid, then the numbers $1, \ldots, n$ must be placed in that column so that there are $m$ numbers in the column which are greater than every number above them. If there is a number $m$ below a column in the empty grid, then the numbers $1, \ldots, n$ must be placed in that column so that there are $m$ numbers in the column which are greater than every number below them. The restrictions for the numbers in rows are defined analogously based on the numbers left or right of the row. Then each number $m$ outside the grid corresponds to the number of visible skyscrapers in the row or column adjacent to $m$ which are visible from the location of $m$.

We consider a visibility representation based on skyscraper puzzles in which there is just a single column in which to place numbers. Any such configuration corresponds to a semi-bar visibility graph. Any numbers above (resp. below) the column are the number of semi-bars which are longer than all semi-bars above (resp. below) them.

We show how to count the number of edges in any semi-bar visibility graph by using the numbers above and below the column in its skyscraper configuration. Furthermore we extend the skyscraper analogy to $k$-visibility graphs to show a similar result for semi-bar $k$-visibility representations.

\subsection{Arc, circle, and semi-arc visibility graphs}
An interesting extension of bar visibility graphs is the concept of \emph{arc visibility graphs} introduced by Hutchinson \cite{Hutchinson}, who defined a non-degenerate \emph{cone} in the plane to be a 4-sided region of positive area with two opposite sides being arcs of circles concentric about the origin and the other two sides being (possibly intersecting) radial line segments. Two concentric arcs $a_1$ and $a_2$ are then said to be \emph{radially visible} if there exists a cone that intersects only these two arcs and whose two circular ends are subsets of the two arcs.

A graph is then called an \emph{arc visibility graph} if its vertices can be represented by pairwise disjoint arcs of circles centered at the origin such that two vertices are adjacent in the graph if and only if their corresponding arcs are radially visible. \emph{Circle visibility graphs} are defined in nearly the same way, with the difference that vertices can be represented as circles as well as arcs. Note that all arc visibility graphs are also circle visibility graphs. Figure~\ref{arcvisible} shows an arc visibility graph and its arc visibility representation.

In any arc visibility graph, the arcs can be expressed uniquely as a set of polar coordinates $\{(r_i,\alpha):\alpha_{i,1}\leq \alpha\leq \alpha_{i,2}\}$ such that $r_i$ is positive, $\alpha_{i,1}$ is in the interval $[0,2\pi)$, and $0 \leq \alpha_{i,2}-\alpha_{i,1}<2\pi$. We call the endpoints corresponding to the coordinates $(r_i,\alpha_{i,1})$ and $(r_i,\alpha_{i,2})$ the \emph{negative} and \emph{positive endpoints} of arc $a_i$, respectively. 

\begin{figure}[h]
\begin{center}
\includegraphics[scale=0.5]{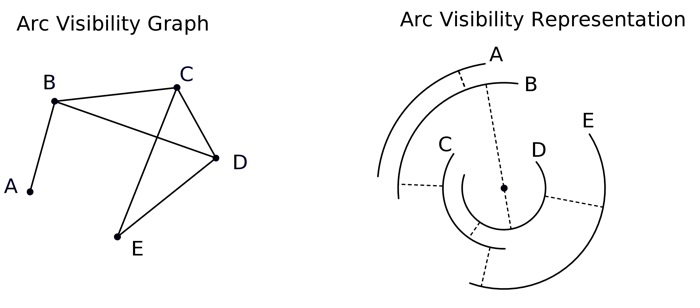}
\setlength{\abovecaptionskip}{0pt}
\caption{An arc visibility graph and its arc visibility representation.}
\label{arcvisible}
\end{center}
\end{figure}

We also examine \emph{arc $k$-visibility graphs} and \emph{circle $k$-visibility graphs}, where cones are allowed to intersect $k$ additional arcs and circles. Figure~\ref{arckvisible} shows the arc $1$-visibility graph of the arc visibility representation shown in Figure~\ref{arcvisible}.

\begin{figure}[h]
\begin{center}
\includegraphics[scale=0.5]{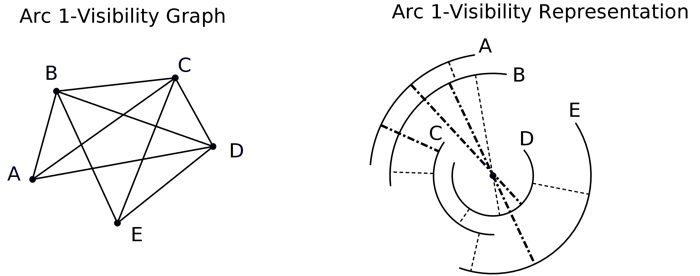}
\setlength{\abovecaptionskip}{0pt}
\caption{An arc $1$-visibility graph and its arc $1$-visibility representation.}
\label{arckvisible}
\end{center}
\end{figure}

We can also define semi-arc $k$-visibility graphs to be arc $k$-visibility graphs in which every arc's negative endpoint lies on the $x$-axis. Like semi-bar $k$-visibility graphs, every semi-arc $k$-visibility graph on $n$ vertices can be represented by a sequence of $n$ integers. Start with a line on the $x$-axis, colored blue on the positive part and red on the negative part, and rotate the line counterclockwise around the origin until it intersects the positive endpoint of a semi-arc. If the blue part of the line intersects the positive endpoints of any semi-arcs, then assign the semi-arcs the integer $1$. If the red part of the line intersects the positive endpoints of any semi-arcs, then assign the semi-arcs the integer $-1$. 

Continue rotating the line counterclockwise. If the blue part of the line intersects the positive endpoints of any semi-arcs, then assign those semi-arcs the least positive integer that is greater in absolute value than all integers assigned when the line was in a previous configuration. If the red part of the line intersects the positive endpoints of any semi-arcs, then assign those semi-arcs the greatest negative integer that is greater in absolute value than all integers assigned when the line was in a previous configuration. After the line has rotated by an angle of $\pi$, all of the semi-arcs are assigned nonzero integers with magnitude at most $n$. 

It follows by definition that all bar $k$-visibility graphs are also arc $k$-visibility graphs and all semi-bar $k$-visibility graphs are also semi-arc $k$-visibility graphs. Indeed let $G$ be a graph on $n$ vertices that has a bar $k$-visibility representation in which the horizontal endpoints of bar $i$ are $a_{i} \geq 0$ and $b_{i} > a_{i}$ and the height of bar $i$ is $h_{i} > 0$. Define $M = \max_{1 \leq i \leq n}\left\{a_{i}, b_{i} \right\}$. For each $i$, draw the arc of radius $h_{i}$ centered at the origin between the angles $\pi\frac{a_{i}}{M}$ and $\pi\frac{b_{i}}{M}$. Then the resulting drawing is an arc $k$-visibility representation of $G$.

Alternatively there exist arc $k$-visibility graphs which are not bar $k$-visibility graphs. The graph $K_{5}$ is not planar, so it is not a bar $0$-visibility graph. However $K_{5}$ has an arc visibility representation. For example consider the sightlines between the five arcs having endpoints $(1, 0)$ and $(1, \frac{\pi}{2})$, $(2, \frac{\pi}{6})$ and $(2, \frac{2\pi}{3})$, $(3, \frac{\pi}{4})$ and $(3, \frac{5\pi}{4})$, $(4, \frac{\pi}{3})$ and $(4, \frac{7\pi}{4})$, and $(5, 0)$ and $(5, \frac{\pi}{2})$. Since each pair of arcs is radially visible in this representation, then $K_{5}$ is an arc $0$-visibility graph. 

When considering arc and circle $k$-visibility graphs, we make two assumptions.

\begin{asmp}
If two endpoints of two arcs have the same angular coordinate, then we can move one slightly without deleting any edges in the arc $k$-visibility graph. So we assume that no two arcs have endpoints with the same angular coordinate since we are maximizing the number of edges.
\end{asmp}

\begin{asmp}
If there are two arcs that are the same distance from the origin, then we can slightly increase the radius of one so that their radii are different without affecting the arc $k$-visibility graph. Therefore we also assume that no two arcs are the same distance away from the origin. We then label the arcs with $a_1$, $a_2$, $\dots$, and $a_n$, where $a_i$ is given to the arc with the $i^{th}$ greatest radius.
\end{asmp}

Any circle can be turned into an arc without deleting any edges, but this could possibly add edges. As we are interested in upper bounds on the number of edges and the chromatic number of arc and circle $k$-visibility graphs, then we will prove bounds for arc $k$-visibility graphs which will also hold for circle $k$-visibility graphs. Therefore we can assume that arc or circle $k$-visibility representations only contain arcs.

In the next section we prove an upper bound of $(k+1)(3n-k-2)$ for $n \geq 4k+4$ on the number of edges and an upper bound of $6k+6$ on the chromatic number of arc and circle $k$-visibility graphs. Since $K_{4k+4}$ is a bar $k$-visibility graph \cite{AlDeank}, then the maximum number of edges in any arc or circle $k$-visibility graph on $n \leq 4k+4$ vertices is ${n \choose 2}$.

\section{Bounds on edges, chromatic number, and thickness}\label{sec:upperbounds}
Dean \emph{et al.} \cite{AlDeank} showed that the thickness of any bar $k$-visibility graph $G_k$ is at most $2k(9k-1)$ by coloring the edges based on a vertex-coloring of $G_{k-1}$. The following lemma provides a lower bound on the maximal thickness of bar $k$-visibility graphs.

\begin{lem}
\label{existbar}
There exist bar $k$-visibility graphs with thickness at least $k+1$ for all $k\geq 0$.
\end{lem}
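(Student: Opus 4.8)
The plan is to exhibit, for each $k \geq 0$, a bar $k$-visibility graph whose edge set cannot be covered by $k$ planar subgraphs. The natural candidate is a complete graph: since $K_{4k+4}$ is a bar $k$-visibility graph by \cite{AlDeank}, and the thickness of $K_n$ is known to grow like $n/6$, a sufficiently large complete graph already gives thickness far exceeding $k+1$. But that overshoots and relies on the thickness of $K_n$; a cleaner route is to find a bar $k$-visibility representation on relatively few vertices that is dense enough to force $k+1$ layers by a simple edge-counting argument against the planarity bound $|E| \leq 3|V| - 6$.

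Concretely, I would take $n$ bars stacked so that the representation contains a large \emph{clique-like} block: choose the bars so that some set of $2k+2$ consecutive bars are mutually visible (each sightline crossing at most $k$ others), which is achievable by the standard "staircase" construction used to show $K_{2k+2}$ embeds, and more generally arrange $n$ bars so the $k$-visibility graph has close to the maximal $(k+1)(2n - 2k - 3)$-type edge count. If $G$ had thickness $\leq k$, then partitioning $E(G)$ into $k$ planar subgraphs on the same $n$ vertices would force $|E(G)| \leq k(3n-6)$. So it suffices to produce a bar $k$-visibility graph $G$ on $n$ vertices with $|E(G)| > k(3n-6)$ for some $n$. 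Using the known maximum $(k+1)(2n-2k-3)$ for $n \geq 2k+2$: we need $(k+1)(2n-2k-3) > k(3n-6)$, i.e. $(2k+2 - 3k)n > (k+1)(2k+3) - 6k$, i.e. $(2 - k)n > \cdots$ — this only works for small $k$, so pure global edge-counting is too weak and I instead localize.

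The fix, and what I expect to be the main obstacle, is to argue more carefully: instead of counting all edges against all vertices, restrict attention to a sub-representation forming $K_{4k+4}$ (guaranteed by \cite{AlDeank}) and recall that $\Theta(K_{4k+4}) = \lceil (4k+4+7)/6 \rceil$ for $4k+4 \neq 9,10$, which is at least $k+1$ for all $k \geq 0$ — indeed $\lceil (4k+11)/6 \rceil \geq (4k+6)/6 = (2k+3)/3 \geq k+1$ exactly when $2k+3 \geq 3k+3$, i.e. $k \leq 0$, so again this is tight only at $k=0$ and I have the direction of the estimate backwards. The genuinely correct approach, then, is the one Dean--Hartke--Veytsman-style constructions use: build a bar $k$-visibility graph that is \emph{not} merely $K_{4k+4}$ but contains $k+1$ edge-disjoint copies of a nonplanar graph (e.g. $k+1$ nested "fans" each of which is nonplanar on its own vertex set) arranged so that no planar subgraph can absorb two of them; equivalently, take bars whose $k$-visibility graph restricted to suitable vertex subsets yields $k+1$ mutually edge-disjoint $K_5$'s or $K_{3,3}$'s sharing vertices in a way that any planar subgraph meets at most $k$ of them. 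I would carry this out by: (1) describing an explicit bar arrangement with $n = 5(k+1)$ or so bars in $k+1$ "levels"; (2) checking each level's induced edges form a $K_5$ via sightlines crossing $\leq k$ bars; (3) checking the levels are pairwise edge-disjoint; (4) concluding that $k$ planar layers cannot cover $k+1$ edge-disjoint $K_5$'s since each planar layer, being planar, omits at least one edge from each $K_5$ but that is not yet a contradiction — so the real work is a counting refinement showing each planar layer can be "responsible" for at most... and here I would invoke that a planar graph on $n$ vertices has at most $3n-6$ edges while the union of the $k+1$ $K_5$'s has $10(k+1)$ edges on $5(k+1)$ vertices, giving $10(k+1) > k(3 \cdot 5(k+1) - 6) = k(15k+9)$ only for $k \leq 0$. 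Since the straightforward density arguments all fail for $k \geq 1$, the crux must be a structural pigeonhole specific to the bar $k$-visibility construction, and that structural lemma — identifying the exact obstruction that forces the $(k+1)$-st layer — is the step I expect to be hard and would need to design the construction around.
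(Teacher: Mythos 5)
There is a genuine gap, and it stems from a mix-up of edge bounds. Your global edge-counting attempt uses $(k+1)(2n-2k-3)$ as the maximum number of edges, but that is the Felsner--Massow bound for \emph{semi-bar} $k$-visibility graphs. For \emph{bar} $k$-visibility graphs the sharp maximum on $n \geq 4k+4$ vertices is $(k+1)(3n-4k-6)$: Hartke \emph{et al.} give the upper bound and Dean \emph{et al.} show it is attained by an explicit representation. With the correct bound, the ``too weak'' counting argument you abandoned actually succeeds, because the leading coefficient is $3(k+1)n$ rather than $2(k+1)n$, and it is being compared against $k(3n-6)$. Concretely: take a bar $k$-visibility graph $G_k$ on $n$ vertices with exactly $(k+1)(3n-4k-6)$ edges; if $\Theta(G_k) = m$, then $m(3n-6) \geq (k+1)(3n-4k-6)$, so $\Theta(G_k) \geq (k+1)\frac{3n-4k-6}{3n-6}$, and this ratio tends to $k+1$ as $n \to \infty$. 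Choosing $n > \frac{4k^2+4k+6}{3}$ makes it exceed $k$, forcing $\Theta(G_k) \geq k+1$. This is exactly the paper's proof.

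Your subsequent attempts do not repair the gap: the $\Theta(K_{4k+4})$ route fails for $k \geq 3$ (as you partly noticed), and the proposed construction of $k+1$ edge-disjoint $K_5$'s ends without the ``structural pigeonhole'' you acknowledge is missing --- covering $k+1$ edge-disjoint nonplanar graphs with $k$ planar layers is not by itself a contradiction, so no lower bound on thickness follows from that outline. The lesson is that no new structural lemma is needed at all; the only missing ingredient was citing the correct extremal edge count for bar (not semi-bar) $k$-visibility graphs together with its sharpness.
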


\begin{proof}
Consider $m$ disjoint planar subgraphs of a bar $k$-visibility graph $G_k$ with $n$ vertices. It is a well known fact that the number of edges in a planar graph is at most six less than three times the number of vertices, so it follows that the number of edges in $G_k$ is at most $m(3n-6)$. Hartke \emph{et al.} \cite{Hartke} showed that if $G_k$ has $n \geq 4k+4$ vertices, then $G_k$ has at most $(k+1)(3n-4k-6)$ edges. Dean \emph{et al.} \cite{AlDeank} showed that this bound is sharp, so we consider a bar $k$-visibility graph with $(k+1)(3n-4k-6)$ edges. Therefore $m(3n-6)\geq(k+1)(3n-4k-6)$. It then follows that $\Theta(G_k)\geq (k+1)\frac{3n-4k-6}{3n-6}$. Fix $k$ and choose $n > \frac{4k^2+4k+6}{3}$ so that $(k+1)\frac{3n-4k-6}{3n-6} > k$. Then $\Theta(G_k)\geq k+1$.
\end{proof}

To bound the thickness of semi-bar $k$-visibility graphs, we define a one-bend construction as in \cite{Felsner}. We will assume no pair of semi-bars have the same length since we can change the lengths without deleting edges in the $k$-visibility graph.

\begin{defn}
The \emph{underlying semi-bar $k$-visibility graph} $G_k$ of $S$ is the graph with semi-bar $k$-visibility representation $S$. 
\end{defn}

Consider a semi-bar $k$-visibility representation $S$ of $G_{k}$ in which semi-bars are horizontal with all right endpoints on the same vertical line and all left endpoints on different vertical lines. We construct a \emph{one-bend drawing} from $S$ as Felsner and Massow did in \cite{Felsner}. In this drawing each edge consists of two segments connected at an endpoint.

To create a one-bend drawing of $G_k$, first widen the bars so that they are rectangles while keeping their lengths constant. Each vertex $v$ in the graph now corresponds to a rectangle $R_v$. Next draw each vertex on the midpoint of the left side of each rectangle. Then take the leftmost endpoint, say $u$, of each edge $e=\{u,v\}$. 

Then project $v$ orthogonally onto the nearest side of $R_u$, and call this projection $v'$. Note that the line between $v$ and $v'$ is a sightline between $R_u$ and $R_v$. Choose $\epsilon$ to be less than the minimum distance between any two endpoints of bars in $S$. Take $v_{\epsilon}$ on the side of $R_u$ containing $v'$ so that the length of $\overline{v'v_{\epsilon}}$ is $\epsilon$. Let $e$ be the union of the two line segments $\overline{uv_{\epsilon}}$ and $\overline{v_{\epsilon}v}$. Note that if two edges are adjacent, then by definition they will not intersect anywhere other than their common endpoint. See Figure~\ref{1b}.

\begin{figure}[h]
\begin{center}
\includegraphics[scale=0.5]{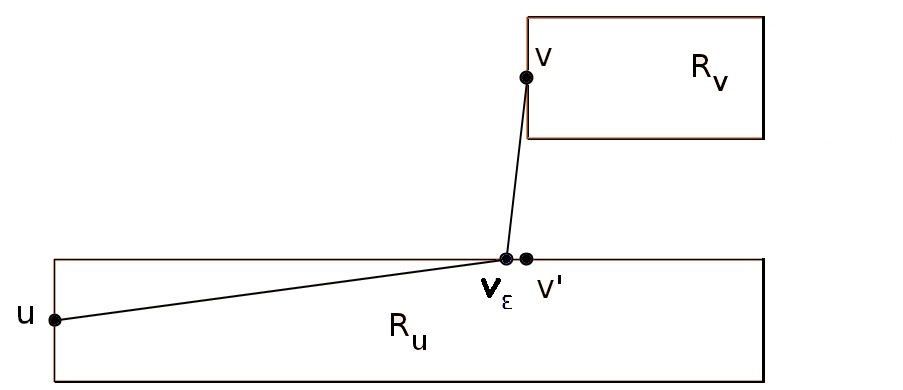}
\setlength{\abovecaptionskip}{0pt}
\caption{A one-bend drawing.}
\label{1b}
\end{center}
\end{figure}

\begin{thm}
\label{sbt}
If $G$ is a semi-bar $k$-visibility graph with $k\geq 1$, then $\Theta(G)\leq 2k$.
\end{thm}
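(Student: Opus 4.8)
The plan is to exhibit an explicit edge-coloring of $G$ with $2k$ colors in which each color class is a planar graph. I will work with the one-bend drawing of a semi-bar $k$-visibility representation $S$ of $G$ described above: every vertex is drawn at the left side of its rectangle, and every edge $\{u,v\}$ with $u$ the leftmost endpoint consists of an almost-horizontal segment from $u$ running rightward to a point $v_\epsilon$ just past (the projection of) $v$ onto a side of $R_u$, then a short segment from $v_\epsilon$ to $v$. The key structural fact I want to use is that in a semi-bar representation the bars are totally ordered by length, so I can index the vertices $1,2,\dots,n$ in decreasing order of bar length; then an edge $\{u,v\}$ with $u<v$ (so $R_u$ is the taller/longer rectangle, extending farther right) has its long segment contained, essentially, in the vertical strip to the right of $v$'s left endpoint and below/above inside $R_u$. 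Because a sightline realizing $\{u,v\}$ crosses at most $k$ other bars, the shorter bar $v$ is the $(j+1)$st longest among those whose left endpoint lies to the right of $v$'s own left endpoint within that strip, for some $0\le j\le k$; equivalently, among the bars strictly shorter than $u$ that block or fail to block the sightline, $v$ occupies one of $k$ "slots."

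The coloring I would try: assign to each edge $\{u,v\}$ (with $u<v$) a color determined by two pieces of data — (i) a parity-type bit saying whether the one-bend turn at $v_\epsilon$ goes to the top side or the bottom side of $R_u$, and (ii) an index in $\{1,\dots,k\}$ recording how many bars shorter than $u$ but longer than $v$ have their left endpoints in the relevant strip (i.e. how "deep" $v$ sits in the stack of candidates visible from $u$ on that side). That gives $2k$ color classes. I then need to check that within one class no two edges cross. Two edges in the same class either share the taller endpoint $u$, in which case they emanate from the same point on the same side and cannot cross except at $u$; or they have distinct taller endpoints $u<u'$, and I argue that the "depth" index together with the side bit forces their long segments into disjoint horizontal sub-strips of the arrangement, so an intersection is impossible. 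The short $\epsilon$-segments are handled exactly as in Felsner–Massow's one-bend analysis: $\epsilon$ is smaller than every inter-endpoint distance, so a short segment can only be crossed by edges it is already forced to avoid.

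The main obstacle I anticipate is step (ii): making precise the claim that the depth index cleanly separates the long segments of same-color edges with different taller endpoints. The subtlety is that the set of bars "shorter than $u$, longer than $v$, with left endpoint in $u$'s strip" depends on $u$, and when I compare edges from $u$ and from $u'>u$ I need a monotonicity statement ensuring that a depth-$i$ edge from $u$ and a depth-$i$ edge from $u'$ run at comparable heights without meeting — this likely requires choosing the index not as a raw count but as the identity (or rank) of a specific blocking bar, so that the two long segments are actually nested or strip-separated rather than merely "both at depth $i$." I would therefore refine (ii) to record, for the sightline realizing $\{u,v\}$, the rank among $\{1,\dots,k\}$ of $v$ within the list of bars strictly between $u$ and $v$ in length whose left endpoints lie in $[0, x(v)]\times\{*\}$ ordered by length, and then prove a crossing-freeness lemma by a careful case analysis on the relative lengths and left-endpoint positions of the four bars involved. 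Once that lemma is in place, summing the two bits gives $\Theta(G)\le 2k$.
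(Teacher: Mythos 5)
Your plan hinges on a claim that you yourself flag as the main obstacle and never establish: that two edges with the same side bit and the same depth index cannot cross in the one-bend drawing. With the raw blocker count this claim is false. Take $k=1$ and four semi-bars of lengths $10,6,5,4$, listed top to bottom as $A,B,C,D$ (right endpoints aligned). The edges $\{A,C\}$ (blocker $B$) and $\{B,D\}$ (blocker $C$) are both depth-$1$ ``downward'' edges, so your rule gives them the same color. But in the one-bend drawing the near-vertical part of $\{A,C\}$ runs at $x$-coordinate close to the left end of $C$ and passes through the interior of $R_B$ from top to bottom, while the long segment of $\{B,D\}$ lies inside $R_B$, running from $B$'s vertex to the bottom side of $R_B$ at the $x$-coordinate of $D$'s left end; these two segments cross inside $R_B$. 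So the separation you need simply does not follow from the pair (side, number of blockers), and the refinement you gesture at (recording the identity or rank of a specific blocking bar, plus a case analysis) is precisely the missing content of the proof. There is also a counting slip: the number of blockers ranges over $\{0,1,\dots,k\}$, which is $k+1$ values, so side times depth gives $2k+2$ classes, not $2k$; even a correct separation lemma would then only yield $\Theta(G)\le 2k+2$ unless you additionally merge classes.

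For comparison, the paper does not pre-assign colors by edge type at all. It first proves a structural lemma: for any bar $B$, at most $2k-1$ bars longer than $B$ have edges crossing $B$ (at most $k$ on each side, and not $k$ on both sides simultaneously, by comparing the two shorter partners of the extreme crossing bars). It then colors the bars greedily in decreasing order of length with $2k$ colors, gives each edge the color of its longer bar, and uses the fact that in the one-bend drawing crossings occur only inside rectangles. If you want to rescue your scheme, you would need to prove a separation statement of comparable strength; as written, the proposal leaves the theorem unproved.
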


\begin{proof}
The main idea of this proof is the next lemma:

\begin{lem}
For any bar $B$ in a semi-bar $k$-visibility representation, there are at most $2k-1$ longer bars in the representation with edges crossing $B$.
\end{lem}

\emph{Proof.} If there are $k+1$ or more longer bars on one side of $B$ with edges crossing $B$, then the bar farthest from $B$ on that side would have an edge crossing at least $k+1$ bars ($B$ and the $k$ bars on that side closest to $B$), a contradiction of $k$-visibility. Therefore there are at most $k$ longer bars on each side of $B$ with edges crossing $B$.

Assume that there are $k$ longer bars on each side of $B$ with edges crossing $B$. Now consider the top-most and bottom-most bars $B_1$ and $B_2$ respectively among those $2k$ bars. From our assumption there must be bars $b_1$ and $b_2$ on the lower and upper sides of $B$ respectively which are shorter than $B$ such that $B_{1}$ has an edge with $b_{1}$ and $B_{2}$ has an edge with $b_{2}$. Assume without loss of generality that $b_1$ is shorter than $b_2$. This implies that the edge from $B_1$ to $b_{1}$ must cross $b_2$, so this edge crosses $k+1$ bars. This is a contradiction, which implies that there are not $k$ longer bars on each side of $B$ with edges crossing $B$. This completes the proof. $\triangle$

Given a one-bend drawing of $G$, start coloring the bars and any edges connected to them in decreasing order of bar length using $2k$ colors. We can use $2k-1$ colors to color each of the longest $2k-1$ bars. For each bar, color its previously uncolored edges with the same color assigned to the bar. Suppose at least $i$ bars have been colored for $i \geq 2k-1$. Edges from longer bars colored with at most $2k-1$ colors will cross the $(i+1)^{st}$ longest bar, so we color this bar with a remaining color. Intersections only happen within bars, so the final coloring of edges produces $2k$ planar subgraphs. This completes the proof of Theorem~\ref{sbt}.\end{proof}

We now prove a lower bound on the maximum thickness of semi-bar $k$-visibility graphs.

\begin{thm}
\label{existsbar}
There exist semi-bar $k$-visibility graphs with thickness at least $\left\lceil \frac{2}{3}(k+1)\right\rceil$ for all $k\geq 0$.
\end{thm}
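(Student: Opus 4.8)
The plan is to mimic the counting argument used in Lemma~\ref{existbar}, but with the sharper edge bound and the sharper thickness-via-planarity bound available for semi-bar graphs. First, recall from Felsner and Massow \cite{Felsner} that a semi-bar $k$-visibility graph on $n$ vertices has at most $(k+1)(2n-2k-3)$ edges for $n\geq 2k+2$, and that this bound is attained. Second, I want a ceiling on how many edges a planar subgraph of a semi-bar graph can carry that is better than $3n-6$: a one-bend drawing shows that a semi-bar $k$-visibility graph, and hence every subgraph of it, embeds in the plane with all bends confined to the widened rectangles, so each planar color class is an outerplanar-type graph — more precisely, since every vertex sits on the left side of its rectangle and all these left sides can be pushed to a common vertical line, each planar subgraph is (a subgraph of) a graph drawn with all vertices on a line and all edges on one side, which has at most $2n-3$ edges. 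So if $G_k$ is a semi-bar $k$-visibility graph on $n$ vertices decomposed into $m=\Theta(G_k)$ planar subgraphs, then $m(2n-3)\geq (k+1)(2n-2k-3)$, giving $\Theta(G_k)\geq (k+1)\frac{2n-2k-3}{2n-3}$, which tends to $k+1$ as $n\to\infty$.

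That only recovers roughly $k+1$, not $\frac{2}{3}(k+1)$ — wait, $\frac{2}{3}(k+1)<k+1$, so in fact the plain counting bound is already \emph{stronger} than the claimed statement, \emph{provided} the $2n-3$ edge bound for planar color classes is correct. The real subtlety, and the step I expect to be the main obstacle, is justifying the per-color-class edge bound. The one-bend drawing certifies planarity of the \emph{whole} graph in a structured way, but an arbitrary planar subgraph of a semi-bar graph need not itself inherit a one-page (book-embedding) structure with the bar-ordering as spine order. So I would instead argue directly: in the one-bend drawing, within any single planar color class, restrict attention to the edges and note that each such edge, being a sightline plus an $\epsilon$-hook, connects a bar to a bar that it can "see past" at most $k$ others — but once we are inside one color class we have thrown away the $k$-visibility leverage. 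The cleaner route is to accept the weaker constant: use only the planar bound $3n-6$ together with the Felsner--Massow edge count, obtaining $\Theta(G_k)\geq (k+1)\frac{2n-2k-3}{3n-6}$, whose limit as $n\to\infty$ is $\frac{2}{3}(k+1)$. Then for fixed $k$, choosing $n$ large enough makes $(k+1)\frac{2n-2k-3}{3n-6} > \frac{2}{3}(k+1)-1$, and since the thickness is an integer and the function approaches $\frac{2}{3}(k+1)$ from below but can be pushed above $\frac{2}{3}(k+1)-1$, we get $\Theta(G_k)\geq \lceil \frac{2}{3}(k+1)\rceil$ once $n$ is sufficiently large. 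One must check the boundary cases where $\frac{2}{3}(k+1)$ is itself an integer or a half-integer, confirming the ceiling is actually reached and not merely approached.

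Concretely, the steps in order: (1) Fix $k\geq 0$ and invoke the Felsner--Massow extremal construction to get, for every $n\geq 2k+2$, a semi-bar $k$-visibility graph $G_k^{(n)}$ with exactly $(k+1)(2n-2k-3)$ edges. (2) Bound below: if $G_k^{(n)}$ is the edge-union of $m$ planar graphs on the same $n$ vertices, then $m(3n-6)\geq (k+1)(2n-2k-3)$, so $\Theta(G_k^{(n)})\geq \lceil (k+1)(2n-2k-3)/(3n-6)\rceil$. (3) Analyze the limit: $\lim_{n\to\infty}(k+1)(2n-2k-3)/(3n-6) = \frac{2}{3}(k+1)$, and the sequence is eventually within any $\varepsilon$ of this limit; pick $n$ with $(k+1)(2n-2k-3)/(3n-6) > \frac{2}{3}(k+1)-1$ (such $n$ exists since the expression converges to $\frac{2}{3}(k+1)$), which forces the ceiling of the left side to be at least $\lceil \frac{2}{3}(k+1)\rceil$. (4) Handle the case $k=0$ separately if needed ($\lceil 2/3\rceil = 1$, and any semi-bar graph with an edge has thickness $\geq 1$). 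The main obstacle is purely the elementary-but-fiddly limit/ceiling bookkeeping in step (3), ensuring the integer thickness genuinely attains $\lceil\frac{2}{3}(k+1)\rceil$ rather than stalling one below it; I would verify this by writing $(k+1)(2n-2k-3)/(3n-6) = \frac{2}{3}(k+1) - \frac{(k+1)(2k-1)}{3n-6}$ and checking that the subtracted correction term is less than $1$ for $n$ large, and less than the distance from $\frac{2}{3}(k+1)$ down to $\lceil\frac{2}{3}(k+1)\rceil - 1$.
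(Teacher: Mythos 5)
Your final argument---combining the sharp Felsner--Massow edge bound $(k+1)(2n-2k-3)$ with the $3n-6$ planar bound per color class, and choosing $n$ large enough that the correction term $\frac{(k+1)(2k-1)}{3n-6}$ drops below the gap between $\frac{2}{3}(k+1)$ and $\lceil\frac{2}{3}(k+1)\rceil-1$---is correct and is exactly the paper's proof, which just repeats the counting argument of Lemma~\ref{existbar} with the semi-bar edge bound substituted. Your initial digression about a sharper $2n-3$ bound per planar class is rightly abandoned and plays no role in the final argument.
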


The proof of Theorem~\ref{existsbar} is analogous to the proof of Theorem~\ref{existbar}; the only difference is the sharp upper bound on the number of edges in a semi-bar $k$-visibility graph with $n$ vertices, which Felsner and Massow \cite{Felsner} proved was $(k+1)(2n-2k-3)$ for $n \geq 2k+2$. 

\subsection{Arc and circle $k$-visibility graphs}
Dean \emph{et al.} found upper bounds on the number of edges and the chromatic number of bar $k$-visibility graphs \cite{AlDeank}. Here we set upper bounds on these properties for arc, circle, and semi-arc $k$-visibility graphs.

It will suffice to find an upper bound on the number of edges of arc $k$-visibility graphs since any circles can be turned into arcs without deleting any edges. Consider an edge $\{u,v\}$ in the arc $k$-visibility graph, and let $U$ and $V$ be the arcs corresponding to $u$ and $v$. 

We define the \textit{angular coordinate of a line of sight} in the following manner. Previously we assigned unique angular coordinates to the points on each arc. Now we can associate each line of sight with the smallest angular coordinate of its two endpoints. We will call this number the angular coordinate of the line of sight. Note that the angular coordinate of a line of sight can vary between $0$ and $3\pi$.

Let $\ell(\{u,v\})$ denote the radial line segment between $U$ and $V$ whose angular coordinate is the infimum of the angular coordinates of all lines of sight between $U$ and $V$. If $\ell(\{u,v\})$ contains the negative endpoint of $U$ (respectively $V$) then we call $\{u,v\}$ a \emph{negative edge} of $U$ (respectively $V$).

If $\ell(\{u,v\})$ does not contain the negative endpoint of $U$ or $V$, then it must contain the positive endpoint of some arc $B$ that blocks the $k$-visibility between $U$ and $V$ before the endpoint. In this case we call $\ell(\{u,v\})$ a \emph{positive edge} of $B$.

\begin{lem}
\label{arcrem}
By definition, there are at most $k+1$ positive edges and at most $2k+2$ negative edges corresponding to each arc. Therefore, there are at most $(3k+3)n$ edges in a circle or arc $k$-visibility graph with $n$ vertices.
\end{lem}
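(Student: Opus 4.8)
The plan is to bound separately the number of positive edges and the number of negative edges attached to a fixed arc, and then sum over all $n$ arcs. For the positive edges of an arc $B$: recall that a positive edge $\ell(\{u,v\})$ of $B$ is a radial segment containing the positive endpoint of $B$, where $B$ is one of the (at most $k$) arcs that the line of sight from $U$ to $V$ passes through before reaching that endpoint. First I would sort the radial line segments through the positive endpoint of $B$ by radius, and argue that if $B$ were a positive edge of more than $k+1$ distinct pairs, then the pair realized by the segment of largest (or smallest) radius among them would have a line of sight crossing more than $k$ arcs — contradicting $k$-visibility. More precisely, since $\ell(\{u,v\})$ is the infimum of angular coordinates of sightlines between $U$ and $V$, just past the positive endpoint of $B$ there is an actual sightline between $U$ and $V$; the arcs it crosses (other than $U$ and $V$) are exactly those that extend past the positive endpoint of $B$ at that angle, and $B$ is one of them, so at most $k+1$ such pairs can simultaneously have $B$ on their "blocking list" in the required position. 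This yields at most $k+1$ positive edges per arc.

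For the negative edges, I would do an analogous count but note that each arc has two endpoints — a negative endpoint and a positive endpoint — and a negative edge of $U$ is a radial segment through the negative endpoint of $U$. However, the subtlety is that the angular coordinate of a line of sight ranges over $[0,3\pi)$ rather than $[0,2\pi)$, because a sightline near the negative endpoint of an arc with small angular coordinate may "wrap around." So one negative endpoint can be approached from sightlines on the "near side" and on the "far side" (differing by roughly $2\pi$ in the line-of-sight coordinate). I would bound the sightlines through the negative endpoint of $U$ that come from each side by $k+1$ using the same crossing argument as for positive edges, giving at most $2(k+1) = 2k+2$ negative edges per arc.

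Finally, summing over all $n$ arcs: every edge $\{u,v\}$ of the graph has a well-defined segment $\ell(\{u,v\})$, which is either a negative edge of $U$, a negative edge of $V$, or a positive edge of some blocking arc $B$; in every case it is counted at least once in the per-arc tallies above. Hence the total number of edges is at most $(k+1)n + (2k+2)n = (3k+3)n$, which is the claimed bound, and it applies to circle $k$-visibility graphs as well since circles can be converted to arcs without losing edges.

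\textbf{Main obstacle.} I expect the delicate point to be the careful bookkeeping around the negative endpoints and the $[0,3\pi)$ range of the line-of-sight angular coordinate: one must verify that a single negative endpoint contributes at most $k+1$ edges "from each side" and that these two families are genuinely distinguished by the line-of-sight coordinate, so that the factor of $2$ (and no more) is correct. The positive-edge count and the final summation are comparatively routine once the crossing/contradiction argument (mirroring the one used in Theorem~\ref{sbt}) is set up cleanly.
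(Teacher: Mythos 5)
Your overall plan (at most $k+1$ positive edges per arc, at most $2k+2$ negative edges per arc, then sum over the $n$ arcs because every edge is a negative edge of one of its two arcs or a positive edge of its unique blocking arc) is exactly the reasoning the paper leaves implicit — the lemma is stated as holding ``by definition'' with no separate proof. But your justification of the negative-edge bound contains a genuine error: you attribute the factor of $2$ to the angular wrap-around, i.e.\ to sightlines approaching the negative endpoint of $U$ from a ``near side'' and a ``far side'' whose line-of-sight coordinates differ by about $2\pi$, and you propose to bound each angular side by $k+1$. That per-side claim is false: all $2k+2$ negative edges of $U$ can be realized on the same angular side. For instance, place $k+1$ arcs just inside $U$ and $k+1$ arcs just outside $U$ in radius, each beginning at an angle slightly before the negative endpoint of $U$ and extending past it; for every one of these $2k+2$ arcs $V$, the infimum of the sightline coordinates for $\{u,v\}$ is attained at the negative endpoint of $U$, so all $2k+2$ edges are negative edges of $U$ arising from the same side. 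The correct source of the factor $2$ is radial, not angular: on the ray just past the negative endpoint of $U$, the sightline realizing a negative edge of $U$ crosses at most $k$ arcs strictly between $U$ and $V$, so $V$ must be one of the $k+1$ arcs nearest to $U$ in radius on the inside or one of the $k+1$ nearest on the outside, giving at most $2k+2$ partners in total. With that substitution your argument closes; the $[0,3\pi)$ range of the line-of-sight coordinate plays no role in this count.

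There is also a smaller slip in the positive-edge count: you assert that the sightline just past the positive endpoint of $B$ crosses a set of arcs ``and $B$ is one of them.'' It is not — $B$ terminates at that endpoint, which is precisely why the sightline becomes legal there. The clean bookkeeping is: just past the endpoint the segment between $U$ and $V$ crosses at most $k$ arcs, while just before it crosses those same arcs together with $B$ and is blocked; hence just before the endpoint exactly $k+1$ arcs separate $U$ and $V$, with $B$ among them, and the pair $\{U,V\}$ is determined by how many of those separating arcs lie radially below $B$, a number between $1$ and $k+1$. This is what yields at most $k+1$ positive edges of $B$. Your final summation, and the reduction from circles to arcs, are fine.
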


\begin{thm}
In a circle or arc $k$-visibility graph with $n$ vertices, there are at most $(k+1)(3n-k-2)$ edges for $n > 4k+4$ and ${n \choose 2}$ edges for $n \leq 4k+4$.
\end{thm}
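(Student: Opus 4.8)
The plan is to treat the two ranges of $n$ separately. For $n\le 4k+4$ there is essentially nothing to prove, since every simple graph on $n$ vertices has at most $\binom{n}{2}$ edges; this bound is moreover attained, because $K_n$ for $n\le 4k+4$ is a bar $k$-visibility graph and hence an arc $k$-visibility graph, as recorded at the end of Section~\ref{sec:def}. So assume $n>4k+4$. Label the arcs $a_1,\dots,a_n$ in decreasing order of radius as in the assumptions of Section~\ref{sec:def}, so that $a_i$ has exactly $i-1$ arcs outside it and $n-i$ arcs inside it. By Lemma~\ref{arcrem} every edge $\{u,v\}$ is classified according to the location of the radial segment $\ell(\{u,v\})$: it is either a negative edge of the unique arc ($U$ or $V$) whose negative endpoint lies on $\ell(\{u,v\})$, or a positive edge of the unique blocking arc $B$ whose positive endpoint lies on $\ell(\{u,v\})$. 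These alternatives are exhaustive by Lemma~\ref{arcrem} and mutually exclusive, because no two endpoints of distinct arcs share an angular coordinate and the two endpoints of a single arc also have distinct angular coordinates. Consequently
\[
  |E|\;\le\;\sum_{i=1}^n\bigl(p_i+q_i\bigr),
\]
where $p_i$ and $q_i$ denote the numbers of positive and of negative edges of $a_i$, and $p_i\le k+1$ for each $i$ by Lemma~\ref{arcrem}.

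The key refinement is a better bound on $q_i$ when $a_i$ is near the innermost or the outermost arc. If $\{a_i,a_j\}$ is a negative edge of $a_i$, then $\ell(\{a_i,a_j\})$ is a radial segment with one endpoint at the negative endpoint of $a_i$ and the other on $a_j$, and, being the limit of genuine lines of sight between $a_i$ and $a_j$, it still crosses at most $k$ other arcs between its endpoints. Hence $a_j$ lies on the radial ray through the negative endpoint of $a_i$ and is one of the first $k+1$ arcs from $a_i$ in one of the two radial directions; but going inward there are only $n-i$ arcs available and going outward only $i-1$, so
\[
  q_i\;\le\;\min(k+1,\,n-i)+\min(k+1,\,i-1),
\]
which improves the uniform bound $2k+2$ of Lemma~\ref{arcrem} precisely for arcs with $i$ close to $1$ or to $n$.

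Combining the two displays gives
\[
  |E|\;\le\;(k+1)n+\sum_{i=1}^n\min(k+1,\,i-1)+\sum_{i=1}^n\min(k+1,\,n-i).
\]
Splitting each sum at $i=k+2$ (valid since $n>4k+4>k+1$) yields $\sum_{i=1}^n\min(k+1,i-1)=\tfrac{k(k+1)}{2}+(n-k-1)(k+1)=(k+1)n-\binom{k+2}{2}$, and the substitution $i\mapsto n+1-i$ shows the last sum equals the same quantity. Therefore $|E|\le 3(k+1)n-(k+1)(k+2)=(k+1)(3n-k-2)$, as desired.

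I expect the two points needing care to be: (i) confirming that the limiting radial segment $\ell(\{u,v\})$ really does cross at most $k$ arcs strictly between $U$ and $V$, so that the ``first $k+1$ arcs'' count is legitimate — this should follow from $\ell(\{u,v\})$ being an infimum over genuine lines of sight; and (ii) checking that the classification of edges into negative edges of arcs and positive edges of arcs is a genuine partition, which is exactly the content of Lemma~\ref{arcrem} together with the assumption that distinct endpoints carry distinct angular coordinates. The remaining work is a routine computation. I also note that this counting argument in fact gives the bound $(k+1)(3n-k-2)$ for all $n\ge k+1$, but it only improves on the trivial bound $\binom{n}{2}$ — which is attained by $K_{4k+4}$ — once $n>4k+4$, which is why the statement is split at that threshold.
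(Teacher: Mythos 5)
Your proof is correct and takes essentially the same route as the paper: classify each edge via $\ell(\{u,v\})$ as a negative or positive edge of some arc (Lemma~\ref{arcrem}), then shave a total of $(k+1)(k+2)$ off the crude $(3k+3)n$ count by exploiting boundary arcs — the paper takes these savings from both the positive and negative counts of the $k+1$ outermost arcs, whereas you take them from the negative counts of the $k+1$ outermost and $k+1$ innermost arcs, an equivalent bookkeeping that yields the same total. One minor quibble with your closing aside: the bound $(k+1)(3n-k-2)$ does not in fact beat $\binom{n}{2}$ as soon as $n>4k+4$ (at $n=4k+5$ one has $(k+1)(11k+13)>\binom{4k+5}{2}$); the threshold $4k+4$ comes from $K_{4k+4}$ being realizable, and this remark has no bearing on the correctness of your argument.
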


\begin{proof}
Suppose that $n > 4k+4$, since the maximum number of edges is ${n \choose 2}$ for $n \leq 4k+4$. Since we may assume that the circle $k$-visibility graph is an arc $k$-visibility graph, then name the arcs $a_{1}, a_{2}, \ldots, a_{n}$ in increasing order of distance from the center of the circle.

Lemma~\ref{arcrem} gives an upper bound of $3n(k+1)$ edges. However arcs $a_{n}$, $a_{n-1}$, $\ldots$, $a_{n-k}$ have at most $k+1$, $k+2$, $\ldots$, $2k+1$ negative edges respectively and $0$, $1$, $\ldots$, $k$ positive edges respectively. Therefore the upper bound on edges can be improved to
$$(3k+3)n-2\sum_{i=1}^{k+1}i=(k+1)(3n-k-2).$$
\end{proof}

\begin{rem}
Note that letting $k=0$ produces an upper bound of $3n-2$ on the number of edges that a circle or arc visibility graph can have.
\end{rem}

These upper bounds on the number of edges of circle $k$-visibility graphs give an upper bound on their chromatic numbers.

\begin{cor}
If $G$ is a circle $k$-visibility graph, then $\chi(G)\leq 6k+6$.
\end{cor}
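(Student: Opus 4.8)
The plan is to deduce the chromatic bound from the edge bound of the preceding theorem via a degeneracy (greedy colouring) argument. The first observation is that the class of circle $k$-visibility graphs is hereditary: if we delete an arc from an arc $k$-visibility representation, the remaining arcs still form a valid arc $k$-visibility representation, and (by the reduction noted before Assumption~2) the same holds for circles. Hence every induced subgraph $H$ of $G$ on $m$ vertices is again a circle $k$-visibility graph, so the theorem applies to $H$.

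Next I would bound the minimum degree of any circle $k$-visibility graph $H$ on $m$ vertices. If $m > 4k+4$, the theorem gives at most $(k+1)(3m-k-2)$ edges, so the average degree is at most $\frac{2(k+1)(3m-k-2)}{m} = (k+1)\bigl(6 - \frac{2k+4}{m}\bigr) < 6(k+1) = 6k+6$; hence $H$ has a vertex of degree at most $6k+5$. If $m \leq 4k+4$, then $H$ trivially has a vertex of degree at most $m-1 \leq 4k+3 \leq 6k+5$. So in every case $H$ has a vertex of degree at most $6k+5$.

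Combining these two facts, $G$ is $(6k+5)$-degenerate: repeatedly remove a vertex of degree at most $6k+5$ (legitimate because each remaining induced subgraph is still a circle $k$-visibility graph and hence still has such a vertex), obtaining an ordering $v_1, \ldots, v_n$ of $V(G)$ in which each $v_i$ has at most $6k+5$ neighbours among $v_1, \ldots, v_{i-1}$. Colour the vertices in the order $v_n, v_{n-1}, \ldots, v_1$ greedily: when $v_i$ is coloured, at most $6k+5$ of its neighbours have already received colours, so one of $6k+6$ colours is available. This produces a proper $(6k+6)$-colouring, giving $\chi(G) \leq 6k+6$.

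There is essentially no hard step here; the only point requiring a moment's care is handling the two regimes $m > 4k+4$ and $m \leq 4k+4$ uniformly so that the degeneracy argument goes through for all induced subgraphs, not just for $G$ itself. Everything else is the standard fact that a $d$-degenerate graph is $(d+1)$-colourable.
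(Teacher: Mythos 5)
Your proposal is correct and is exactly the argument the paper intends: the paper's one-line proof invokes the Dean \emph{et al.} degeneracy/greedy-colouring scheme, noting that every subgraph has a vertex of degree at most $6k+5$, which is precisely what you establish from the edge bound. One small caveat: deleting arcs from a representation does not in general yield a representation of the \emph{induced} subgraph $H$, since removing an arc can create new sightlines, so the remaining arcs represent a graph $H' \supseteq H$ on the same vertex set; this is harmless here, because the edge bound applied to $H'$ still forces a vertex of degree at most $6k+5$ in $H$, and the degeneracy argument goes through unchanged.
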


The proof of the last corollary is nearly identical to the proof for bar $k$-visibility graphs by Dean \emph{et al.} since every subgraph of $G$ has a vertex of degree at most $6k+5$ \cite{AlDeank}.

\begin{thm}
In semi-arc $k$-visibility graphs on $n$ vertices, the maximum number of edges is ${n \choose 2}$ for $n \leq 3k+3$ and at most $(k+1)(2n-\frac{k+2}{2})$ for $n > 3k+3$.
\end{thm}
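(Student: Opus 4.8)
The statement splits into two claims, which I would handle separately.

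For $n\le 3k+3$ the upper bound $\binom n2$ is trivial, so the content is that $K_{3k+3}$ is a semi-arc $k$-visibility graph (the smaller complete graphs are then obtained by deleting arcs). The plan for this is to fix the radial order of the arcs, $a_1,\dots,a_{3k+3}$, and then choose (i) the order in which the negative endpoints appear, after a small perturbation, as one sweeps counterclockwise from the positive $x$-axis, and (ii) the angular lengths. Introducing the arcs in the alternating radial order $a_1,a_{3k+3},a_2,a_{3k+2},a_3,\dots$ keeps the set of arcs currently met by a radial sweep equal to a prefix together with a suffix of the radial order, which makes every pair except a bounded family near the two ends radially $k$-visible while the sight line is still close to the $x$-axis; the remaining pairs are cleared by giving short lengths to a few of the non-extreme innermost and outermost arcs, so that they disappear before the pairs they would block come up. The only real work is verifying that all $\binom{3k+3}{2}$ pairs are $k$-visible.

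For $n>3k+3$ I would prove the edge bound by induction on $k$. Writing $G_k=G_k(S)$ for the semi-arc $k$-visibility graph of a representation $S$ on $n$ arcs, the same $S$ is a semi-arc $j$-visibility representation for every $j\le k$, and $G_0\subseteq G_1\subseteq\cdots\subseteq G_k$. Suppose one shows that for every such $S$ (and every $j\ge 0$, in the relevant range $n>3k+3$) the ``level set'' $E(G_j)\setminus E(G_{j-1})$ — the pairs that are $j$-visible but not $(j-1)$-visible — has at most $2n-j-1$ edges, with the convention $G_{-1}=\varnothing$, so that the $j=0$ statement is the assertion that a semi-arc visibility graph has at most $2n-1$ edges. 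Then, telescoping and summing over $j=0,1,\dots,k$,
$$|E(G_k)|=\sum_{j=0}^{k}\bigl|E(G_j)\setminus E(G_{j-1})\bigr|\le\sum_{j=0}^{k}(2n-j-1)=(k+1)(2n-1)-\tfrac{k(k+1)}2=(k+1)\Bigl(2n-\tfrac{k+2}2\Bigr),$$
which is exactly the claimed bound. So the entire second half reduces to this one level-set estimate.

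To prove the level-set estimate I would reuse the dichotomy from Lemma~\ref{arcrem}: assign to each edge $\{u,v\}$ its sight line $\ell(\{u,v\})$ of least angular coordinate, so that a pair in $E(G_j)\setminus E(G_{j-1})$ has at most $j$ arcs radially between $u$ and $v$ along $\ell(\{u,v\})$ — and in fact exactly $j$, since the pair is not $(j-1)$-visible — while $\ell(\{u,v\})$ either contains the negative endpoint of $u$ or $v$, which in a semi-arc representation forces it onto the positive or the negative $x$-axis, or contains the positive endpoint of one of those $j$ blocking arcs, which is just then about to occlude the pair. Bounding the first kind by reading off, along each of the two fixed rays, the pairs that are $j$-close among the arcs that ray crosses, and charging the second kind to the positive endpoints of the blocking arcs, one should reach the target $2n-j-1$. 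This count is the main obstacle: it is the semi-arc analogue of the semi-bar fact (implicit in Felsner and Massow) that the $j$-th level has at most $2n-4j-3$ edges, but here the freedom to perturb the negative endpoints along the $x$-axis is exactly what produces the extra $3j+2$ edges, and pinning it down — probably via a one-bend drawing of the level-$j$ edges, or via a careful direct accounting along the two $x$-axis rays combined with the positive-endpoint charging (where, for instance, arcs of angular length more than $\pi$ must be treated specially, since once the sweep passes them only comparably long arcs remain to be freed) — is where the real effort lies.
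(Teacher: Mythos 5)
Your reduction of the $n>3k+3$ bound to the per-level estimate $|E(G_j)\setminus E(G_{j-1})|\le 2n-j-1$ is arithmetically consistent with the target, since $\sum_{j=0}^{k}(2n-j-1)=(k+1)(2n-\tfrac{k+2}{2})$, but that estimate is the entire content of the theorem and you do not prove it: you yourself flag it as ``the main obstacle'' and ``where the real effort lies.'' Nothing in your sketch explains why the exactly-$j$-visible level should lose exactly $j$ edges relative to $2n-1$; the proposed charging (negative endpoints forced onto the two $x$-axis rays plus positive endpoints of the $j$ blocking arcs) is never carried out, and the per-level claim is a strictly finer statement than the theorem requires, so it is not even clear it is true. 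As written, the second half is a reduction to an unproven lemma, not a proof. The first half has the same character, though more mildly: you assert that the alternating radial order realizes $K_{3k+3}$ but defer the verification that all $\binom{3k+3}{2}$ pairs are $k$-visible; the paper instead appeals to the explicit construction of Figure~\ref{sakv}, which at $n=3k+3$ has exactly $\binom{3k+3}{2}$ edges.

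For comparison, the paper's argument for $n>3k+3$ uses no induction on $k$ and no level decomposition. It charges every edge to the positive endpoint of one of the two arcs forming the edge: a radial visibility segment through a positive endpoint can realize at most $k+1$ edges going inward and at most $k+1$ going outward, so each arc is charged at most $2k+2$ edges, and the $k+1$ outermost arcs are charged at most $k+1,k+2,\dots,2k+1$ edges respectively; summing gives $(2k+2)n-\tfrac{(k+1)(k+2)}{2}=(k+1)(2n-\tfrac{k+2}{2})$ directly. If you want to rescue your route, the missing per-level lemma is the whole battle; otherwise the global charging above is the shorter path, with the one delicate point (which your remark about arcs of large angular extent correctly anticipates) being the justification that every edge can indeed be drawn through the positive endpoint of one of its own two arcs.
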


\begin{proof}
If $n \leq 3k+3$, then $K_{n}$ is a semi-arc $k$-visibility graph (see Figure \ref{sakv}). If $n > 3k+3$, then let $G$ be a graph on $n$ vertices with semi-arc $k$-visibility representation $S$. Every edge in $G$ can be drawn as a visibility segment in $S$ intersecting the positive endpoint of at least one of the semi-arcs in the edge. Since edges in the representation can cross at most $k$ arcs that are not in the edge, then at most $2k+2$ edges can be drawn intersecting the positive endpoint of each arc. However the $k+1$ outermost arcs have at most $k+1, k+2, \ldots, 2k+1$ edges respectively that can be drawn intersecting their positive endpoint, which implies the upper bound.
\end{proof}

\begin{figure}[h]
\begin{center}
\includegraphics[scale=1]{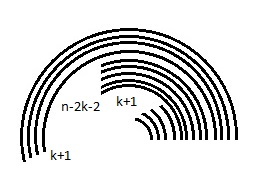}
\setlength{\abovecaptionskip}{0pt}
\caption{A semi-arc $k$-visibility representation with $n$ vertices and $(k+1)(2n-\frac{3k+6}{2})$ edges for $n \geq 3k+3$.}
\label{sakv}
\end{center}
\end{figure}

\begin{cor}
If $G$ is a semi-arc $k$-visibility graph, then $\chi(G)\leq 4k+4$.
\end{cor}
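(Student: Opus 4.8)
The goal is to show $\chi(G)\le 4k+4$ for every semi-arc $k$-visibility graph $G$, and the natural route mirrors the argument the authors invoke for circle $k$-visibility graphs: produce a uniform bound on the minimum degree that is inherited by every induced subgraph, then apply the standard greedy/degeneracy colouring bound. Concretely, the plan is to prove that every semi-arc $k$-visibility graph on $m$ vertices has a vertex of degree at most $4k+3$; since every induced subgraph of a semi-arc $k$-visibility graph is again a semi-arc $k$-visibility graph (restrict the representation to the sub-collection of semi-arcs, which only removes edges and can only shrink the blocking count, so visibility is preserved), this makes $G$ $(4k+3)$-degenerate, whence $\chi(G)\le 4k+4$ by the usual peeling argument (repeatedly remove a minimum-degree vertex and colour in reverse order).

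To get the degree bound I would re-use the edge-count theorem just proved: a semi-arc $k$-visibility graph on $m$ vertices has at most $(k+1)\bigl(2m-\tfrac{k+2}{2}\bigr)$ edges when $m>3k+3$, and at most $\binom{m}{2}$ edges otherwise. In the range $m>3k+3$ this forces an average degree of at most $\frac{2(k+1)(2m-(k+2)/2)}{m}=4(k+1)-\frac{(k+1)(k+2)}{m}<4k+4$, so some vertex has degree $\le 4k+3$. For $m\le 3k+3$ the graph has at most $3k+3$ vertices, so every vertex has degree at most $3k+2\le 4k+3$, which is even better. Thus in all cases a minimum-degree vertex has degree at most $4k+3$, and degeneracy $\le 4k+3$ follows by induction on $m$.

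An alternative, perhaps cleaner, route is to bound the minimum degree directly from the representation rather than through the edge count, exactly as the authors do for the circle case by splitting each vertex's incident edges into "positive" and "negative" edges. In the semi-arc setting each semi-arc contributes at most $2k+2$ edges through its positive endpoint (the semi-arc analogue of the $(k+1)$ positive $+$ $(2k+2)$ negative split collapses because there is only a positive endpoint in play once the negative endpoints are pinned to the $x$-axis — this is precisely the mechanism exploited in the preceding edge-count proof). Summing $2k+2$ over all $m$ arcs and halving gives $\le (k+1)m$ edges as a crude bound, hence average degree $\le 2(k+1)$ and minimum degree $\le 2k+1<4k+3$; again $(4k+3)$-degeneracy follows. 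I would present whichever of these is shortest to state rigorously, noting the other in a remark.

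The main obstacle, and the only place that needs genuine care, is the claim that the class of semi-arc $k$-visibility graphs is closed under taking induced subgraphs — without this, a global edge or degree bound on $G$ says nothing about colourability. The subtle point is that deleting a semi-arc from a representation could, a priori, expose a new line of sight that passes through fewer than $k$ arcs and thereby create an edge not present before; but since we are only \emph{removing} vertices we only \emph{remove} potential blockers, so any line of sight that was valid (blocked by $\le k$ arcs) in the restricted representation was also valid in the original, meaning no new edges appear and the restricted representation realises exactly the induced subgraph. Once this closure property is nailed down, the colouring bound is immediate from the degeneracy argument above.
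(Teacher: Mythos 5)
Your overall strategy is the same as the paper's (which simply invokes the Dean \emph{et al.} argument): use the edge bound to show every subgraph has a vertex of degree at most $4k+3$, conclude $(4k+3)$-degeneracy, and colour greedily. However, the step you yourself single out as the one needing ``genuine care'' is argued in the wrong direction. Deleting semi-arcs from a representation removes potential \emph{blockers}, so a sightline between two retained arcs that was blocked by more than $k$ arcs in the original may be blocked by at most $k$ arcs in the restriction: new edges \emph{can} appear. (Concretely, take four nested arcs with the same angular span and $k=1$; deleting the second-outermost arc creates an edge between the outermost and innermost arcs that is absent in the induced subgraph.) What is true is the converse of your claim: every edge of the induced subgraph $G[W]$ survives in the restricted representation, since its witnessing sightline crosses no more arcs than before. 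So the restricted representation realizes a graph that contains $G[W]$ as a spanning subgraph, not necessarily $G[W]$ itself, and closure of the class under induced subgraphs is neither established by your argument nor needed. The repair is immediate: because $G[W]$ is a subgraph of a semi-arc $k$-visibility graph on $|W|$ vertices, the edge bound (at most $(k+1)(2m-\frac{k+2}{2})$ edges for $m>3k+3$, at most $\binom{m}{2}$ otherwise) applies to $G[W]$ as well, giving average degree strictly below $4k+4$ and hence a vertex of degree at most $4k+3$; degeneracy and the colouring bound then follow as you describe.

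Your alternative route has a separate flaw: the ``halving'' is unjustified. In the paper's charging scheme each edge is drawn through the positive endpoint of \emph{at least one} of its two arcs, not both, so the crude bound is $(2k+2)m$ edges, not $(k+1)m$; that only yields minimum degree at most $4k+4$, which is not enough without the refinement that the $k+1$ outermost arcs carry fewer edges. So the first route, with the hereditary step repaired as above, is the one to present.
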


Again, the proof of the last corollary is like the proof for bar $k$-visibility graphs since every subgraph of $G$ has a vertex of degree at most $4k+3$ \cite{AlDeank}. The construction in Figure \ref{sakv} implies the next lower bound.

\begin{thm}\label{sakvl}
The maximum number of edges in a semi-arc $k$-visibility graph is at least $(k+1)(2n-\frac{3k+6}{2})$ for $n \geq 3k+3$.
\end{thm}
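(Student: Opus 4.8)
The statement is a lower bound, so the plan is to exhibit a single semi-arc $k$-visibility representation on $n$ vertices — precisely the one drawn in Figure~\ref{sakv} — and to check that its $k$-visibility graph has $(k+1)\bigl(2n-\tfrac{3k+6}{2}\bigr)$ edges. Two elementary facts about the target quantity $f(n):=(k+1)\bigl(2n-\tfrac{3k+6}{2}\bigr)$ shape the construction: we have $f(3k+3)=\binom{3k+3}{2}$ (one checks both sides equal $\tfrac{3}{2}(k+1)(3k+2)$), and $f(n+1)-f(n)=2k+2$. Hence the representation should contain a copy of $K_{3k+3}$ on $3k+3$ of its arcs — this is consistent with the fact, used just above, that $K_n$ is a semi-arc $k$-visibility graph for $n\le 3k+3$ — and every further arc should be radially visible (through at most $k$ intermediate arcs) to exactly $2k+2$ of the arcs already present, and to no others. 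In short, the construction is a $K_{3k+3}$ core together with a ``staircase tail'' of arcs each contributing $2k+2$ fresh edges.

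To realise this concretely I would use both anchor points permitted on the $x$-axis, angular coordinates $0$ and $\pi$: arcs anchored at $0$ grow counterclockwise out of the positive $x$-axis, and arcs anchored at $\pi$ grow counterclockwise out of the negative $x$-axis, possibly wrapping past angle $2\pi$. Choosing the angular spans (equivalently, the length sequence / integer labelling) so that successive arc endpoints interleave in a controlled wrap-around staircase pattern, one can arrange that in the densest angular region $3(k+1)$ nested-but-staggered arcs are pairwise radially visible through at most $k$ others; this is exactly where the factor $3$ improves on the factor $2$ that a one-sided (semi-bar) staircase provides. The remaining $n-(3k+3)$ arcs are then added as successive outermost arcs with spans just long enough to be seen by the $2k+2$ arcs immediately inside them and short enough that no additional sightlines appear.

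The verification proceeds pair by pair, as in the proof of the preceding theorem: for a claimed edge $\{u,v\}$ one produces an explicit cone meeting $U$, $V$, and at most $k$ other arcs; for a claimed non-edge one argues that every cone meeting both $U$ and $V$ meets at least $k+1$ others. A convenient bookkeeping tool is the extremal sightline $\ell(\{u,v\})$ already introduced: one classifies edges by whether $\ell(\{u,v\})$ passes through a negative or a positive endpoint and of which arc. Once all adjacencies are confirmed, the edge count is the sum $\binom{3k+3}{2}+(n-3k-3)(2k+2)$, which telescopes to $f(n)$ by the two identities above.

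The main obstacle is the explicit choice of angular spans: it must simultaneously keep every sightline among the $3k+3$ core arcs from crossing more than $k$ arcs — the ``number of arcs strictly in between'' is the delicate quantity, since in a tightly packed wrap-around configuration it is easy to block a pair inadvertently — and prevent the tail arcs from acquiring more than $2k+2$ edges apiece. This cannot be done by a soft counting argument; it needs the actual coordinates and a short case analysis. As an alternative to the ad hoc count one could extend the skyscraper-puzzle edge formula of Section~\ref{sec:semibar} from semi-bars to semi-arcs and simply evaluate it on the construction's integer sequence.
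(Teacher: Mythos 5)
Your plan follows the same route as the paper: the paper proves Theorem~\ref{sakvl} purely by exhibiting an explicit representation (the one in Figure~\ref{sakv}) and counting its edges, and your arithmetic scaffolding is correct and does identify the structure such a construction must have --- writing $f(n)=(k+1)\bigl(2n-\tfrac{3k+6}{2}\bigr)$, one indeed has $f(3k+3)=\binom{3k+3}{2}$ and $f(n+1)-f(n)=2k+2$, so a $K_{3k+3}$ core plus tail arcs each contributing $2k+2$ new edges gives $\binom{3k+3}{2}+(n-3k-3)(2k+2)=f(n)$.

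The genuine gap is that the construction itself is never produced, and the construction \emph{is} the theorem: this is a pure existence statement, so everything hinges on writing down actual radii and angular spans and verifying two claims, namely (i) that $3k+3$ wrap-around semi-arcs anchored at angles $0$ and $\pi$ can be arranged so that every pair is radially visible through at most $k$ intermediate arcs (this is exactly where semi-arcs beat semi-bars, whose largest complete graph is only $K_{2k+2}$, so it cannot be waved through), and (ii) that each additional arc can be placed so that it gains exactly $2k+2$ edges and no more. You assert both (``one can arrange that\dots''), then explicitly concede that choosing the spans is ``the main obstacle'' needing ``the actual coordinates and a short case analysis,'' and you do not supply them; your fallback (extending the skyscraper edge formula of Section~\ref{sec:semibar} to semi-arcs) is likewise only proposed, not carried out. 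So as written the argument verifies that the numbers would telescope correctly \emph{if} the claimed representation exists, but does not establish existence, which is what the paper's Figure~\ref{sakv} supplies. A small point that would make the incremental step clean once coordinates are fixed, and which you should state, is that an arc added outside all existing arcs cannot destroy any existing edge, since a cone witnessing visibility between two inner arcs occupies only radii strictly between them; hence the verification for each tail arc really does reduce to bounding that single arc's new edges, but it still has to be done against explicit angular data.
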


\section{Counting edges in semi-bar $k$-visibility graphs}\label{sec:semibar}
In this section we derive a formula for counting the number of edges in semi-bar $k$-visibility graphs. For the final formula semi-bars are allowed to have the same lengths, but we first consider the case when the lengths are different. 

Let $G = (V, E)$ be a semi-bar $0$-visibility graph with $n$ vertices. Then $G$ has some semi-bar visibility representation $S_{G} = \left\{s_{v}\right\}_{v \in V}$ of disjoint horizontal segments with left endpoints on the $y$-axis (semi-bars) such that for all $a, b \in V$ all semi-bars between $s_{a}$ and $s_{b}$ are shorter than both $s_{a}$ and $s_{b}$ if and only if $\left\{a,b\right\} \in E$.

Let the function $A(S)$ be the number of semi-bars in $S$ which are taller than all semi-bars above them, and $U(S)$ be the number of semi-bars in $S$ which are taller than all semi-bars under them. These are analogous to the numbers above and below each column in skyscraper puzzles. Moreover if all semi-bars have different lengths, then $S_{G}$ corresponds to a permutation of the integers $\left\{1, \ldots, n \right\}$ with the topmost bar of $S_{G}$ representing the first term of the permutation. The function $A(S)$ corresponds to the number of left-to-right maxima in the permutation, while $U(S)$ corresponds to the number of right-to-left maxima in the permutation.

For each $s \in S$ let $a(s) = 1$ if $s$ is taller than all semi-bars above it and let $a(s) = 0$ otherwise. So $a(s) = 1$ precisely when the term in the permutation corresponding to $s$ is a left-to-right maximum. Let $u(s) = 1$ if $s$ is taller than all semi-bars under it and let $u(s) = 0$ otherwise. Then $A(S) = \sum_{s \in S} a(s)$ and $U(S) = \sum_{s \in S} u(s)$.

\begin{lem}
\label{skyscraper}
If $S_{G}$ is any semi-bar visibility representation of $G$ and all semi-bars in $S_{G}$ have different lengths, then the number of edges in $G$ is $2n-A(S_{G})-U(S_{G})$.
\end{lem}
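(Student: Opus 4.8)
The plan is to count edges by charging each edge to a distinguished endpoint of one of the two semi-bars it connects, in a way that makes the count easy to aggregate. For a semi-bar visibility graph, an edge $\{a,b\}$ is witnessed by a vertical sightline passing between $s_a$ and $s_b$ and hitting no other semi-bar's interior; pushing this sightline to the right as far as possible, it becomes a vertical segment through the right endpoint of the \emph{shorter} of $s_a, s_b$ (the other semi-bar, being longer and having its left end on the $y$-axis, still spans that $x$-coordinate). So I would associate to each edge the right endpoint of its shorter semi-bar, and then count, for each semi-bar $s$, how many edges get charged to it this way — call this $d(s)$.

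The key step is to show that $d(s) = 2 - a(s) - u(s)$ for every $s \in S_G$. Fix $s$ and look at the first semi-bar strictly longer than $s$ going up from $s$ (if any) and the first one going down from $s$ (if any). If there is such a bar above, say $t$, then all bars strictly between $s$ and $t$ are shorter than $s$, so the sightline through the right end of $s$ reaches $t$ and $\{s,t\}$ is an edge charged to $s$; moreover this is the only edge charged to $s$ on the upward side, since any longer bar farther up would be blocked by $t$. If there is no longer bar above $s$, then $s$ is a left-to-right maximum, i.e. $a(s)=1$, and $s$ contributes no upward-charged edge. Symmetrically for the downward side and $u(s)$. Hence $s$ is charged exactly by an edge to its nearest taller neighbor above (unless $a(s)=1$) plus an edge to its nearest taller neighbor below (unless $u(s)=1$), giving $d(s) = (1-a(s)) + (1-u(s)) = 2 - a(s) - u(s)$.

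Finally I would sum over all semi-bars. Every edge is charged to exactly one endpoint (the right endpoint of its uniquely shorter semi-bar — uniqueness of the shorter bar uses the assumption that all lengths are distinct), so
\[
|E| \;=\; \sum_{s \in S_G} d(s) \;=\; \sum_{s \in S_G}\bigl(2 - a(s) - u(s)\bigr) \;=\; 2n - A(S_G) - U(S_G),
\]
using $\sum_s a(s) = A(S_G)$ and $\sum_s u(s) = U(S_G)$ and $|S_G| = n$.

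\textbf{Main obstacle.} The routine verifications (a sightline through the right end of the shorter bar is genuinely unobstructed; each edge is charged exactly once) are where care is needed, but the real content is the local identity $d(s) = 2 - a(s) - u(s)$: one must argue that the nearest strictly-taller bar above $s$ is the \emph{only} upward neighbor of $s$ among bars charged to $s$, which is exactly the $0$-visibility blocking condition, and that the edge-charging bijection doesn't double count or miss edges (e.g. an edge between two bars where one is not "the shorter" — impossible here since lengths are distinct). I expect this bookkeeping, rather than any deep idea, to be the crux.
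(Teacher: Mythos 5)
Your proposal is correct and is essentially the paper's own argument: the paper also charges each edge to its (unique, by distinct lengths) shorter semi-bar and observes that each bar has exactly $(1-a(s))+(1-u(s)) = 2-a(s)-u(s)$ edges to taller bars, summing to $2n-A(S_G)-U(S_G)$. Your write-up just spells out the "nearest taller bar above/below" bookkeeping that the paper leaves implicit.
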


\begin{proof}
Pick an arbitrary semi-bar visibility representation $S_{G}$ of $G$. For each $v \in V$, count how many edges in $E$ include $v$ and some $w$ for which $s_{w}$ is taller than $s_{v}$. Then each $v$ contributes $2-a(s_{v})-u(s_{v})$ edges, so there are $2n-A(S_{G})-U(S_{G})$ total edges.
\end{proof}

We now extend this formula to all semi-bar $k$-visibility graphs. Let $G_k = (V, E)$ be a semi-bar $k$-visibility graph. Then $G_k$ has a semi-bar $k$-visibility representation $S_{G_k} = \left\{s_{v}\right\}_{v \in V}$ of disjoint horizontal semi-bars with left endpoints on the $y$-axis such that for all $a, b \in V$, all but at most $k$ semi-bars between $s_{a}$ and $s_{b}$ are shorter than both $s_{a}$ and $s_{b}$ if and only if $\left\{a,b\right\} \in E$. Define $\left\{s_{a}, s_{b}\right\}$ to be a $j$-visibility edge if all but exactly $j$ semi-bars between $s_{a}$ and $s_{b}$ are shorter than both $s_{a}$ and $s_{b}$.

Let the function $A_{j}(S)$ be the number of semi-bars in $S$ which are taller than all but at most $j$ semi-bars above them, and $U_{j}(S)$ be the number of semi-bars in $S$ which are taller than all but at most $j$ semi-bars under them. For each $s \in S$ let $a_{j}(s) = 1$ if $s$ is taller than all but at most $j$ semi-bars above it and let $a_{j}(s) = 0$ otherwise. Let $u_{j}(s) = 1$ if $s$ is taller than all but at most $j$ semi-bars under it and let $u_{j}(s) = 0$ otherwise. Then $A_{j}(S) = \sum_{s \in S} a_{j}(s)$ and $U_{j}(S) = \sum_{s \in S} u_{j}(s)$.

Call an unordered pair of semi-bars $\left\{s_{a}, s_{b}\right\}$ a $j$-bridge if $s_{a}$ is the same height as $s_{b}$ and all but exactly $j$ semi-bars between $s_{a}$ and $s_{b}$ are shorter than $s_{a}$. A semi-bar can be contained in at most two $j$-bridges for each $j$. Let $Br_{j}(S)$ denote the number of $j$-bridges in $S$. 

\begin{thm}
\label{edgecount}
If $S_{G_k}$ is any semi-bar $k$-visibility representation of $G_k$, then the number of edges in $G_k$ is $$2(k+1)n-\displaystyle\sum_{j = 0}^{k} (A_{j}(S_{G_k})+U_{j}(S_{G_k})+Br_{j}(S_{G_k})).$$
\end{thm}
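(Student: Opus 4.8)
The plan is to generalize the counting argument of Lemma~\ref{skyscraper} by summing over all visibility levels $j \in \{0, 1, \ldots, k\}$. For each vertex $v$, I will count the number of edges incident to $v$ whose other endpoint corresponds to a \emph{strictly taller} semi-bar. Since every edge $\{a, b\}$ of $G_k$ connects two semi-bars of possibly equal height, I will need to handle the equal-height case separately — this is exactly what the $j$-bridge terms are designed to absorb.

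First I would fix a semi-bar $k$-visibility representation $S_{G_k}$ and, for each ordered level $j$ with $0 \le j \le k$, consider the $j$-visibility edges, i.e., the pairs $\{s_a, s_b\}$ such that all but exactly $j$ semi-bars strictly between them are shorter than both. The edge set $E$ is the disjoint union, over $j = 0, \ldots, k$, of the sets of $j$-visibility edges (an edge is realized at a unique level $j$). So it suffices to count, for each $j$, the number of $j$-visibility edges, and then sum. Next, for a fixed $j$, I would count $j$-visibility edges by looking at each semi-bar $s$ and asking: in which $j$-visibility edges incident to $s$ is the \emph{other} semi-bar at least as tall as $s$? Going upward from $s$, there is exactly one such edge unless $s$ is already taller than all but at most $j$ semi-bars above it (in which case there is none) — this contributes $1 - a_j(s)$; going downward, similarly $1 - u_j(s)$. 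Summing $ (1 - a_j(s)) + (1 - u_j(s))$ over all $n$ semi-bars double-counts each $j$-visibility edge whose two semi-bars have \emph{different} heights exactly twice, and counts each $j$-visibility edge whose two semi-bars have \emph{equal} height — that is, each $j$-bridge — once in the upward tally of one bar and once in the downward tally of the other, i.e., also twice but as a ``diagonal'' pair that the up/down decomposition treats asymmetrically. Dividing by $2$ and correcting gives: the number of $j$-visibility edges equals $n - \tfrac{1}{2}\bigl(A_j(S_{G_k}) + U_j(S_{G_k})\bigr) + \tfrac{1}{2}(\text{something})$; getting the bookkeeping so that the correction term comes out to exactly $-\tfrac{1}{2}Br_j(S_{G_k})$ after doubling is the delicate part.

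The cleanest way to organize this, which I would adopt, is to \emph{not} divide by two at all but instead count \emph{incidences}: assign to each semi-bar $s$ the quantity $(1 - a_j(s)) + (1 - u_j(s))$, which is the number of $j$-visibility edges $\{s, t\}$ with $t$ reachable from $s$ going up or down past exactly $j$ taller obstructions where we look toward a bar at least as tall. Summing over $s$ gives $2(k+1)n - \sum_j (A_j + U_j)$ once we also sum over $j$. Now each $j$-visibility edge with distinct heights is counted once from each endpoint (the taller one sees nothing ``at least as tall'' in that direction, the shorter one sees the taller one), so twice total; but wait — that would double the edge count, so the correct statement must be that an edge $\{s,t\}$ with $s$ strictly taller than $t$ is counted \emph{once}, from $t$'s perspective only, since from $s$'s perspective $t$ is not ``at least as tall.'' A $j$-bridge $\{s,t\}$ with equal heights is counted \emph{twice}, once from each endpoint. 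Hence $\sum_s \sum_j [(1-a_j(s)) + (1-u_j(s))] = |E| + \sum_j Br_j(S_{G_k})$, which rearranges to the claimed formula. The main obstacle is precisely pinning down this incidence-counting convention so that the ``at least as tall'' tie-breaking is applied consistently and the $j$-bridges are the unique source of the correction; I would make this rigorous by carefully defining, for each semi-bar $s$ and each direction, the unique candidate partner (if any) and verifying it lies in a $j$-visibility edge, then checking the two cases (distinct heights vs.\ equal heights) exhaust all possibilities. I should also double-check the edge case where $A_0$ counts the single tallest bar once (it is trivially taller than everything above it) and confirm this does not cause an off-by-one, and verify the remark that each semi-bar lies in at most two $j$-bridges for each $j$ is consistent with the formula.
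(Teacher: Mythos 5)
Your argument is correct and is essentially the paper's own proof: for each level $j$ you count, for every semi-bar $s$, the $j$-visibility edges toward a partner at least as tall, obtaining $2-a_j(s)-u_j(s)$ per bar, observe that the $j$-bridges are exactly the doubly counted edges, and sum the resulting $2n-A_j-U_j-Br_j$ over $j\le k$. The only difference is cosmetic — your exploratory detour about halving the incidence count, which you correctly abandon in favor of the same bookkeeping the paper uses.
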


\begin{proof}
Pick an arbitrary semi-bar $k$-visibility representation $S_{G_k}$ of $G_k$. Fix $j \leq k$, and for each $v \in V$, count how many $j$-visibility edges include $s_{v}$ and some $s_{w}$ for which $s_{w}$ is at least as tall as $s_{v}$. Then each $v$ contributes $2-a_{j}(s_{v})-u_{j}(s_{v})$ $j$-visibility edges, but the $j$-visibility edge between $s_{a}$ and $s_{b}$ is double counted whenever $\left\{s_{a}, s_{b}\right\}$ is a $j$-bridge. So there are $2n-A_{j}(S_{G_k})-U_{j}(S_{G_k})-Br_{j}(S_{G_k})$ total $j$-visibility edges in $S_{G_k}$. Then there are $2(k+1)n-\sum_{j = 0}^{k} (A_{j}(S_{G_k})+U_{j}(S_{G_k})+Br_{j}(S_{G_k}))$ total edges in $G_k$.
\end{proof}

Since Felsner and Massow showed a tight upper bound of $(k+1)(2n-2k-3)$ on the number of edges in semi-bar $k$-visibility graphs with $n \geq 2k+2$ vertices, then Theorem~\ref{edgecount} implies the next corollary.

\begin{cor}
If $S_{G_k}$ is any semi-bar $k$-visibility representation of $G$ with $n \geq 2k+2$ vertices, then $$\sum_{j = 0}^{k} (A_{j}(S_{G_k})+U_{j}(S_{G_k})+Br_{j}(S_{G_k})) \geq (k+1)(2k+3).$$
\end{cor}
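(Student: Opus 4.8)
The plan is to combine Theorem~\ref{edgecount}, which gives the exact edge count of $G_k$ in terms of the functionals $A_j$, $U_j$, and $Br_j$ of any representation $S_{G_k}$, with the Felsner--Massow tight upper bound $(k+1)(2n-2k-3)$ on the number of edges of a semi-bar $k$-visibility graph with $n \geq 2k+2$ vertices. Since the number of edges in $G_k$ equals $2(k+1)n - \sum_{j=0}^k (A_j(S_{G_k}) + U_j(S_{G_k}) + Br_j(S_{G_k}))$ for \emph{every} representation $S_{G_k}$, and this quantity is at most $(k+1)(2n-2k-3)$, I would simply rearrange the inequality to isolate the sum.

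In more detail: first I would invoke Theorem~\ref{edgecount} to write $|E(G_k)| = 2(k+1)n - \sum_{j=0}^k (A_j(S_{G_k}) + U_j(S_{G_k}) + Br_j(S_{G_k}))$. Next I would note that $G_k$ is a semi-bar $k$-visibility graph on $n \geq 2k+2$ vertices, so by the Felsner--Massow bound $|E(G_k)| \le (k+1)(2n-2k-3)$. Substituting gives $2(k+1)n - \sum_{j=0}^k (A_j + U_j + Br_j) \le (k+1)(2n-2k-3)$, hence $\sum_{j=0}^k (A_j(S_{G_k}) + U_j(S_{G_k}) + Br_j(S_{G_k})) \ge 2(k+1)n - (k+1)(2n-2k-3) = (k+1)\bigl(2n - 2n + 2k + 3\bigr) = (k+1)(2k+3)$, which is exactly the claimed bound.

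There is essentially no obstacle here; the corollary is a direct algebraic consequence of the two already-established results. The only point requiring a word of care is that Theorem~\ref{edgecount} holds for an \emph{arbitrary} representation $S_{G_k}$ of $G_k$ (so the identity, and hence the resulting inequality, is valid representation-by-representation), which matches the quantification in the corollary statement. I would also briefly remark that the bound is sharp, since Felsner and Massow exhibit graphs attaining $(k+1)(2n-2k-3)$ edges, and for those graphs any representation must satisfy the inequality with equality. One could present the whole argument in two or three lines of display math, being careful to combine the constants correctly.
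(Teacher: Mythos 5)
Your proposal is correct and follows exactly the route the paper intends: the paper derives this corollary directly by combining Theorem~\ref{edgecount} with the Felsner--Massow bound $(k+1)(2n-2k-3)$, and your algebraic rearrangement $2(k+1)n-(k+1)(2n-2k-3)=(k+1)(2k+3)$ is the whole argument.
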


\section{Open Problems}

The results in this paper leave open questions beyond the ones mentioned in \cite{AlDeank, Felsner, Hartke}. 

\begin{ques}
What is the maximum number of edges in a semi-arc $k$-visibility graph on $n$ vertices for $n > 3k+3$?
\end{ques}

We conjecture that the bound in Theorem \ref{sakvl} is tight.

\begin{conj}\label{maxsakv}
The maximum number of edges in a semi-arc $k$-visibility graph on $n$ vertices is $(k+1)(2n-\frac{3k+6}{2})$ for $n \geq 3k+3$.
\end{conj}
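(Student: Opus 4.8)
The lower bound $(k+1)(2n-\frac{3k+6}{2})$ is already supplied by the construction of Theorem~\ref{sakvl}, so the entire task is the matching upper bound: every semi-arc $k$-visibility graph on $n \ge 3k+3$ vertices has at most $(k+1)(2n-\frac{3k+6}{2})$ edges. The plan is to sharpen the charging argument that already yields the weaker bound $(k+1)(2n-\frac{k+2}{2})$. That proof assigns each edge to the positive endpoint of one of its two arcs, bounds by $2k+2$ the number of edges chargeable to any single positive endpoint, and then saves $\binom{k+2}{2}$ by observing that the $k+1$ outermost arcs $a_{n},\dots,a_{n-k}$ admit only $k+1,k+2,\dots,2k+1$ such edges respectively. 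Since $(k+1)(2n-\frac{3k+6}{2}) = (2k+2)n - 3\binom{k+2}{2}$ while the established bound is $(2k+2)n-\binom{k+2}{2}$, it suffices to exhibit two further families of savings, each of size $\binom{k+2}{2}$, that are disjoint from the known boundary family.

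First I would establish a radial saving at the center dual to the one at the boundary. An edge charged to the positive endpoint of $a_{i}$ is realized by a radial segment through that endpoint joining $a_{i}$ to a partner lying either inside or outside $a_{i}$ and separated from it by at most $k$ of the arcs present at that angle. For the innermost arcs $a_{1},\dots,a_{k+1}$ there are only $0,1,\dots,k$ arcs available on the inner side, so at most $k+1,k+2,\dots,2k+1$ edges can be charged to them, producing a second block of $\binom{k+2}{2}$ saved edges. This block is disjoint from the boundary block once $n \ge 2k+2$ keeps the inner and outer extreme arcs apart.

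Second, and this is where the real content lies, I would extract a saving of $\binom{k+2}{2}$ from the angular structure. Order the arcs by the angular coordinate of their positive endpoints, as in the sweep that defines the signed-integer representation of a semi-arc graph. An edge drawn through the positive endpoint of an arc $B$ requires a partner whose arc still reaches the angle of $B$'s positive endpoint, and for the $k+1$ arcs whose positive endpoints occur last in this order the supply of admissible partners is progressively exhausted, which I expect to cap the edges chargeable to them at $k+1,k+2,\dots,2k+1$ as well. I would then show that for $n \ge 3k+3$ this angular block is disjoint from the two radial blocks, which is exactly the regime in which there are enough arcs for the three pairs of extreme sets not to interfere; below it, $K_{n}$ is a semi-arc $k$-visibility graph and the count is $\binom{n}{2}$.

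The main obstacle is that the radial order (the labeling $a_{1},\dots,a_{n}$) and the angular order of the positive endpoints are two \emph{independent} orderings of the same $n$ arcs, so the three families of extremal arcs may overlap and an edge saved in one family must not be silently re-saved in another. Controlling this requires treating the representation as a signed permutation and carrying out an honest extremal optimization: I would recast the edge count as a sum over arcs of a capacity depending jointly on an arc's radial rank and its angular rank, in the spirit of the exact identity of Theorem~\ref{edgecount}, and prove that this sum is maximized precisely by the configuration of Figure~\ref{sakv}, where all three savings are simultaneously attained and additive. Showing that no other representation exceeds $3\binom{k+2}{2}$ total savings is the crux; because the local $2k+2$ bound is too lossy to be summed naively, I expect this to demand a global discharging or exchange argument rather than a term-by-term estimate, and this is the step I anticipate being the hardest and the reason the statement is posed as a conjecture.
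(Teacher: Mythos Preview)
There is nothing to compare against: in the paper this statement is explicitly a \emph{conjecture}, posed as an open problem, and no proof is given. Your proposal is therefore not a competing proof but an outline of how one might attack the problem; you yourself flag the decisive step as unresolved, so the proposal should be read as a strategy, not a proof.

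On the strategy itself, two remarks. Your first two savings are sound. The outer radial saving is the paper's own, and the inner radial saving (the innermost $k+1$ arcs can each host at most $k+1,\dots,2k+1$ edges at their positive endpoints) is a genuine and immediate improvement of the paper's bound to $(k+1)(2n-(k+2))$ for $n\ge 2k+2$; the paper simply did not record it. The gap is in the third saving. Your angular argument tacitly assumes every semi-arc has its negative endpoint at angle $0$, so that arcs can only \emph{disappear} as the sweep angle increases and hence every edge is forced onto the positive endpoint of its shorter-angle arc. But the paper defines a semi-arc as one whose negative endpoint lies on the $x$-axis, allowing $\alpha_{i,1}\in\{0,\pi\}$; with arcs starting at $\pi$ the monotonicity you rely on fails, and indeed if all arcs started at $0$ the graph would be a semi-bar $k$-visibility graph with at most $(k+1)(2n-2k-3)$ edges, strictly below the construction of Theorem~\ref{sakvl}. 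So the extra edges that separate semi-arcs from semi-bars come precisely from the $\pi$-based arcs your sweep does not handle, and any angular saving must account for both families simultaneously. This, rather than mere disjointness of the three extremal sets, is the real obstruction.
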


This conjecture would also imply the next conjecture.

\begin{conj}
$K_{3k+4}$ is not a semi-arc $k$-visibility graph.
\end{conj}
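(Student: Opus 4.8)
The plan is to prove the equivalent statement that every semi-arc $k$-visibility graph on $n = 3k+4$ vertices has strictly fewer than $\binom{3k+4}{2}$ edges; since a representation of $K_{3k+4}$ would need all $\binom{3k+4}{2}$ pairs adjacent, this yields the claim. This is implied by Conjecture~\ref{maxsakv}, so one route is to prove that conjecture outright; instead I would aim only at the single value $n = 3k+4$. There the gap to close is concrete: the trivial estimate of $2k+2$ edges per positive endpoint gives $(2k+2)n = 2(k+1)(3k+4)$, which exceeds the target $\binom{3k+4}{2} = \frac{3(k+1)(3k+4)}{2}$ by exactly $\frac{(k+1)(3k+4)}{2}$ edges, so that many charging ``deficits'' must be exhibited.

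First I would sharpen the charging used to prove the semi-arc edge bound, in which each edge is drawn through the positive endpoint of one of its two arcs and each positive endpoint carries at most $2k+2$ edges (a sightline through it can cross at most $k$ intervening arcs). The published argument discounts only the $k+1$ outermost arcs, whose capacities drop to $k+1, k+2, \ldots, 2k+1$. I would add the symmetric observation that the $k+1$ \emph{innermost} arcs are equally constrained: an arc of smallest radius has no arcs inside it, so at most $k+1$ present arcs lie within $k+1$ radial positions of it, forcing capacities $k+1, \ldots, 2k+1$ there as well. Since $n = 3k+4 > 2(k+1)$, the two groups are disjoint, so the edge bound improves to $(2k+2)n - (k+1)(k+2) = (k+1)(2n-k-2)$, which is $(k+1)(5k+6)$ for $n = 3k+4$.

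This improved bound still exceeds $\binom{3k+4}{2}$, but now by exactly $\binom{k+1}{2}$, so it remains to locate $\binom{k+1}{2}+1$ further missing edges. These should come from the \emph{angular} rather than radial structure: ordering arcs by the angle of their positive endpoints, an arc whose positive endpoint is among the largest can be charged by only a few arcs, since few arcs are still present at that angle. In fact, for a representation in which every negative endpoint sat at angle $0$, this angular discount alone already yields the stronger semi-bar bound $(k+1)(2n-2k-3)$, which at $n=3k+4$ equals $(k+1)(4k+5)$ and sits well below the target. The obstruction is precisely the arcs whose negative endpoint lies at angle $\pi$: their angular spans can wrap past $0$ and restore adjacencies that the purely angular count would forbid, which is exactly the extra power that lets semi-arc graphs beat semi-bar graphs for $k\geq 1$.

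I would therefore partition the arcs into the class starting at angle $0$ and the class starting at angle $\pi$; within each class the positive-endpoint order behaves like a semi-bar order and contributes genuine angular deficits, while cross-class edges are constrained by the fact that at any fixed angle only one class supplies its common starting endpoint. A discharging argument combining the radial-extreme deficits with these in-class angular deficits, taking care not to double count arcs that are extreme in both radius and angle, should recover the remaining $\binom{k+1}{2}+1$ units. The hard part, and the reason this remains a conjecture, is quantifying how much extra adjacency the wrap-around of the angle-$\pi$ arcs can create: bounding it tightly enough to close the last $\binom{k+1}{2}$ edges is essentially equivalent to showing that the construction of Figure~\ref{sakv} is extremal, which is the content of Conjecture~\ref{maxsakv}.
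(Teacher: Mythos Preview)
The paper does not prove this statement unconditionally: it is listed as a conjecture, and the ``proof'' given is the one-line observation that it follows from Conjecture~\ref{maxsakv}, since that conjecture caps the edges of a semi-arc $k$-visibility graph on $3k+4$ vertices at $\tfrac{1}{2}(k+1)(9k+10) < \binom{3k+4}{2}$. You state this same implication in your second paragraph, so you have already matched the paper's entire argument.

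Everything after that is an attempt to go further than the paper does. Your innermost-arc observation is correct and is a genuine sharpening of the published semi-arc bound: the positive endpoint of the $i$th innermost arc can only be charged with edges to arcs of larger radius, so its capacity is at most $k+i$ for $1\le i\le k+1$, and with $n=3k+4>2k+2$ these deficits are disjoint from the outermost ones. This brings the count down to $(k+1)(5k+6)$, exactly $\binom{k+1}{2}$ above $\binom{3k+4}{2}$, as you computed. But your final step---recovering those last $\binom{k+1}{2}+1$ edges via an angular discharging that splits the arcs by whether their negative endpoint sits at angle $0$ or $\pi$---is only a sketch, and you yourself say that bounding the wrap-around contribution tightly enough ``is essentially equivalent to'' Conjecture~\ref{maxsakv}. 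So your proposal does not constitute an unconditional proof, and you should not present it as one; it is a partial sharpening plus a plausible plan, which is more than the paper offers, but the statement remains open in both your write-up and theirs.
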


\begin{proof}
By Conjecture \ref{maxsakv}, a semi-arc $k$-visibility graph on $3k+4$ vertices can have at most $\frac{1}{2}(k+1)(9k+10)$ edges, which is less than $\frac{1}{2}(3k+3)(3k+4)$.
\end{proof}

There are also similar open questions about arc $k$-visibility graphs.

\begin{ques}
What is the maximum number of edges in an arc $k$-visibility graph on $n > 4k+4$ vertices?
\end{ques}

\begin{ques}
What is the largest complete arc $k$-visibility graph?
\end{ques}

\section{Acknowledgments} 
We thank Jacob Fox for suggesting this project. This paper is based on a Research Science Institute math project where the first author was a student, the second author was a graduate student mentor, and the third author was the head mentor in math. The first author would like to thank Dr.\ John Rickert for his excellent advice on research; Mr.\ Timothy J. Regan from the Corning Incorporated Foundation, Mr.\ Peter L. Beebee, and Mr.\ David Cheng for their sponsorship; and Mr.\ Zachary Lemnios, Dr.\ Laura Adolfie, Dr.\ John Fischer, and Dr.\ Robin Staffin from the Department of Defense for naming him as a Department of Defense Scholar. He would also like to thank the Center for Excellence in Education, the Research Science Institute, and the Massachusetts Institute of Technology for making this endeavor possible. The second author was supported by the NSF Graduate Research Fellowship under Grant No. 1122374.

\end{document}